\newtheorem{theorem}{Theorem}[section]
\newtheorem{lemma}[theorem]{Lemma}
\newtheorem{proposition}[theorem]{Proposition}
\newtheorem{corollary}[theorem]{Corollary}
\theoremstyle{definition}
\newtheorem{definition}[theorem]{Definition}
\theoremstyle{remark}
\newtheorem{remark}[theorem]{Remark}
\theoremstyle{remark}
\newtheorem{example}[theorem]{Example}
\theoremstyle{remark}
\theoremstyle{remark}
 \newcounter{saveenumi}
\begin{document}

\title{On Algebraic Properties of Topological Full Groups}
\author{R. Grigorchuk and K. Medynets}

\date{}

\address{Rostislav Grigorchuk,  Department of Mathematics,
Texas A\&M University, College Station, TX 77843-3368, USA}
\email{grigorch@math.tamu.edu}

\address{Konstantin Medynets, Ohio State University, Department of Mathematics, Columbus, OH  43210-1174, USA}
\email{medynets@math.ohio-state.edu}

\begin{abstract} In the  paper we discuss  the algebraic structure of topological full group $[[T]]$ of a Cantor minimal system $(X,T)$. We show that  the topological full group $[[T]]$  has the structure  similar to a union of permutational wreath products of group $\mathbb Z$. This  allows us to prove that the topological full groups  are  locally embeddable into finite groups; give an elmentary proof of the fact that group $[[T]]'$ is infinitely presented; and provide explicit examples of maximal locally finite subgroups of $[[T]]$.   We also show that the commutator subgroup $[[T]]'$, which is simple and finitely-generated for minimal subshifts, is decomposable into a product of two locally finite groups and that the groups $[[T]]$  and $[[T]]'$ possess continuous ergodic  invariant random subgroups. 
\end{abstract}

\maketitle

\section{Introduction}

In the paper we study algebraic properties of a completely new, from the  geometric group theory point of view, class of groups --- the full groups of dynamical systems. Our goals are  to develop the machinery for the study of full groups,  establish a number of new results, and to survey known facts. We also hope to attract attention of specialists in group theory to  full groups as they  possess   very unusual algebraic properties, which seem to have never appeared in the literature  before. As yet another motivation to the study of full groups, we would  like to mention a recent preprint \cite{Thomas:2012}, where full groups were applied to the complexity problem of isomorphism relation between finitely-generated simple groups.   Since the theory of full groups lies on the intersection of the theory of dynamical systems and group theory, we  take an extra care while discussing dynamical results  so that the paper becomes accessible to non-specialists in dynamics.

The full groups first appeared 50 years ago in the seminal paper of H.~Dye \cite{Dye:1963} as an algebraic invariant of  orbit equivalence for measure-preserving dynamical systems. Even though, the full groups contain {\it complete information} about orbits of the underlying systems,  their algebraic structure  has remained largely unknown. 
 Let $G$ be a group acting on a set $X$. The set $X$ may be considered along with some structure such as  a measure, a Borel $\sigma$-algebra, or a topology. Consider the group $[G]$, termed the {\it full group} of $G$, consisting of all automorphisms $S$ of $X$ preserving the  structure such that for every $x\in X$, $S(x) = g(x)$ for some $g\in G$. It turns out that in many cases the group $[G]$ is so ``rich'' that any isomorphism $\alpha$ between $[G]$ and $[H]$, where a group $H$ also acts on $X$ by structure-preserving automorphisms, will always be spatially generated in the sense that there is an automorphism $\Lambda : X\rightarrow X$ with $\alpha(g) = \Lambda \circ g\circ \Lambda^{-1}$ for every $g\in [G]$.  Such  results have been established for groups acting on a Cantor set   \cite[Theorem 4.5(c)]{Rubin:1989},  \cite{GlasnerWeiss:1995}, \cite{GiordanoPutnamSkau:1999},   \cite{BezuglyiKwiatkowski:2000},   \cite{BezuglyiMedynets:2008},  and \cite{Medynets:2011};  on non-compact zero-dimensional spaces \cite{Matui:2002}; standard Borel spaces
  \cite{Miller_Rosendal:2007}; and measure spaces   \cite{Dye:1963} and \cite{Zhuravlev:PhD}. As an application to the theory of dynamical systems, these results  show that algebraic (group) properties of full groups  completely determine the class of orbit equivalence for underlying  dynamical systems. This brings in a question of  interaction between  dynamical characteristics of the system and algebraic properties of full groups.

  In the present paper, we  focus on algebraic properties of full groups for   systems acting on Cantor sets. Consider a pair $(X,T)$, where $X$ is a Cantor set and $T: X\rightarrow X$ is a homeomorphism of $X$.  Denote by $[T]$ the full group of $\{T^n\}_{n\in \mathbb Z}$.  Consider a subgroup $[[T]]$, termed the {\it topological full group},  consisting of all homeomorphisms  $S$ of the set $X$ such that  $Sx=T^{f_S(x)}x$ for every point $x$, where $f_S: X\rightarrow \mathbb Z$ is a continuous  function.  {\it We would like to emphasize that no group topology is involved and the word ``topological'' refers to the setting of topological dynamics. }  The group $[[T]]$ is countable and its commutator subgroup is simple and finitely generated under some assumptions on $(X,T)$ (listed further in the text).

One of the main techniques in Cantor dynamics is the method of periodic approximation, which mimics the behaviour of $(X,T)$ by periodic transformations.  This roughly  means that there exists a refining sequence of clopen (Kakutani-Rokhlin) partitions $\{\Xi_n\}_{n\geq 1}$ (Section \ref{SectionPreliminaries}) such that the action of $T$ can be seen as  nearly a permutation of  $\Xi_n$-atoms. We will use the Kakutani-Rokhlin tower analysis to  show that every element of the topological full group $[[T]]$ can be uniquely represented as a product of a  permutation from $Sym(\Xi_n\textrm{-atoms})$ and   induced transformations of $T$ on some clopen sets (Theorem \ref{TheoremRepresentation}). The proof of this result indicates that the topological full group $[[T]]$ has structure similar to  an increasing union  of wreath products  $\mathbb Z$ Wr $Sym(\Xi_n\textrm{-atoms})$. We then use Theorem \ref{TheoremRepresentation} to establish that {\it the topological full group of a Cantor minimal system is locally embeddable into finite groups} (LEF groups) (Theorem \ref{IntroMainResult}).

As corollaries of  Theorem \ref{IntroMainResult}, we  obtain the following results: (1) topological full groups are sofic; (2)  topological full groups are not finitely presented (established originally by H.~Matui \cite{Matui:2006}) (here we use arguments from \cite{grigorchuk:1984} related to the convergence in the space of marked groups);  and (3) the universal theory of topological full groups coincide with that of finite groups. We will also prove that the commutator subgroup $[[T]]'$, while being simple and finitely generated for minimal subshifts, can be represented as $[[T]]' = \Gamma_1\Gamma_2$, where $\Gamma_i$, $i=1,2$ is a (maximal) locally finite subgroup. To the best of our knowledge, this is the first example of simple finitely generated groups with such a factorization property. We will also construct explicit examples of maximal locally finite subgroups.

The structure of the paper is the following.
In Section \ref{SectionAlgebricProperties}, we list known   and establish several new algebraic properties of topological full groups.

Section \ref{SectionPreliminaries} is devoted to basics of Cantor dynamics. We introduce all necessary definitions from the theory of dynamical systems and explain the essence of  Kakutani-Rokhlin tower analysis, which will be the main technical ingredient in our proofs. We would like to mention that the present paper is perhaps the first work that uses  Kakutani-Rokhlin partitions  for the study of algebraic properties of full groups.

In Section \ref{SectionRotationsPermutations} we prove  Theorem \ref{TheoremRepresentation}.  This result was inspired by the paper of Bezuglyi and Kwiatkowski \cite{BezuglyiKwiatkowski:2000}.

Section \ref{SectionLEF} is devoted to the proof of the LEF property and to the discussion of locally finite subgroups.

\bigskip
{\bf Acknowledgement.} We would like to thank  Hiroki Matui, Sergey Bezuglyi,  Jan Kwiatkowski  for numerous helpful discussions of full groups. We are also thankful to  Mathieu Carette, Dennis Dreesen,  Mark Sapir, and Vitaliy Sushchanskyy for their  comments.

%%%%%%%%%%%%%5
%
%
%  Algebraic Properties

\section{Algebraic Properties of Topological Full Groups}\label{SectionAlgebricProperties}

This section can be deemed as a short survey on algebraic properties of full groups as we list results  scattered over numerous papers. Throughout the paper the symbol $X$ will stand for a Cantor set, i.e. for any zero-dimensional compact metric space without isolated points, and  $T: X\rightarrow X$ will denote a homeomorphism of $X$.  Recall that all Cantor sets are homeomorphic to each other and, in particular, to the space of sequences $\{0,1\}^\mathbb N$.

Consider a pair $(X,T)$, where $X$ is a Cantor set and $T: X\rightarrow X$ is a homeomorphism.  The pair $(X,T)$ is called a {\it Cantor dynamical
 system}. If  $X$ has no proper closed $T$-invariant  subsets, then $T$ is called {\it minimal}.   We will always assume that a dynamical system $(X,T)$ is {\it minimal}, i.e., if $Y\subset X$ is a non-empty closed set with $T(Y)=Y$, then $Y = X$. The minimality of $T$ is equivalent to the property that every $T$-orbit $\{T^n(x) : n\in\mathbb Z\}$ is dense  in $X$. Observe that if $T$ is a minimal homeomorphism, then $T^k$, $k\neq 0$, might be non-minimal, though, it will still have no finite orbits. A homeomorphism that has no finite orbits is called {\it aperiodic}. The following is an equivalent definition of the topological full group.

\begin{definition}\label{DefinitionFullGroups} A {\it topological full group} $[[T]]$ is defined as a group consisting of all homeomorphisms $S$ for which there is a finite clopen partition $\{C_1,\ldots,C_m\}$ of $X$ and a set of integers $\{n_1,\ldots,n_m\}$ such that $S|C_i = T^{n_i}|C_i$ for every $i=1,\ldots, m$.
\end{definition}
 
 Consider the topological full group $[[T]]$.  For a point $x\in X$, denote by $[[T]]_x$  the set of all elements $S\in [[T]]$ with $S(\{T^n(x) : n\geq 0\}) = (\{T^n(x) : n\geq 0\})$.  It follows from the compactness and zero-dimensionality of the Cantor set that the group $[[T]]$ is countable. The groups  $[[T]]$ and $[[T]]'$ (the commutator subgroup) are complete algebraic  invariants for  flip conjugacy of the system $(X,T)$,  \cite{GiordanoPutnamSkau:1999} and \cite{BezuglyiMedynets:2008}.   We recall that two dynamical systems $(X,T)$ and $(Y,S)$ are {\it flip conjugated} if either $T$ and $S$ or $T$ and $S^{-1}$ are topologically conjugated.    This implies that dynamical invariants of  flip conjugacy such as the topological entropy \cite[Theorem 7.3]{Walters:book}, spectral characteristics,  the number of invariant ergodic measures, and others, can be used to distinguish  full groups and their commutator subgroups up to  isomorphism.
 Observe that for every positive real number $\alpha$, there exists a minimal subshift with the topological entropy equal to $\alpha$, see  \cite{HahnKatznelson:1967} and \cite{Grillenberger}. This implies that there are uncountably many non-isomorphic topological full groups as well as their commutator subgroups. In particular, {  \it there are uncountably many non-isomorphic finitely generated simple  groups  satisfying the properties listed  in the following theorem   and other results of the present paper}.

\begin{theorem}\label{TheoremPropertiesFullGroups}

(1)  For every $S\in [[T]]$, let $\varphi(S) = \int_X f_S(x) d\mu(x)$, where $\mu$ is a probability $T$-invariant measure.  Then $\varphi$   is a homomorphism from $[[T]]$ onto $\mathbb Z$ \cite[Section 5]{GiordanoPutnamSkau:1999}. Thus, the group $[[T]]$ is indicable.

(2) For every point $x\in X$, the group $[[T]]_x$ is locally finite \cite{GiordanoPutnamSkau:1999}.

(3) If $x,y\in X$ are points lying in different  $T$-orbits, then $Ker(\varphi) = [[T]]_x\cdot [[T]]_y$ is a product of two locally finite groups \cite[Lemma 4.1]{Matui:2006}.

(4)  Any finite group can be embedded into $[[T]]'$, Remark \ref{RemarkInfiniteAbelianGroup}. The group $[[T]]'$ contains a subgroup isomorphic to an infinite direct sum of the group $\mathbb Z$, Remark \ref{RemarkInfiniteAbelianGroup}.

(5) The commutator subgroup $[[T]]'$ is simple \cite[Theorem 4.9]{Matui:2006}. Furthermore, if $H$ is a normal subgroup in $[[T]]$, then $[[T]]'\subset H$  \cite[Theorem 3.4]{BezuglyiMedynets:2008}.

(6) The commutator subgroup $[[T]]'$ is finitely generated if and only if $(X,T)$ is topologically isomorphic to a minimal subshift over a finite alphabet \cite[Theorem 5.4]{Matui:2006}.

(7) If $(X,T)$ is a minimal subshift, then the group $[[T]]'$ is not finitely presented \cite[Theorem 5.7]{Matui:2006}.

(8) The commutator subgroup $[[T]]'$ contains the lamplighter subgroup if and only if $(X,T)$ is not conjugated to an odometer (a rotation on $\{p_n\}$-adic integers).
\end{theorem}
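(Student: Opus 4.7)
The plan is to prove the equivalence in two directions.

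For the forward direction, assume $(X,T)$ is conjugate to an odometer. The Kakutani-Rokhlin partitions $\Xi_n$ consist of a single tower at each level, and by Theorem~\ref{TheoremRepresentation} every element of $[[T]]$ decomposes rigidly through this single-tower structure. The task is to show this rigidity rules out an embedding of $\mathbb{Z}/2\wr\mathbb{Z}$. In the lamplighter the infinite-order generator $t$ satisfies $t a_i t^{-1}=a_{i+1}$ for a $\mathbb{Z}$-indexed family of pairwise-commuting, independent involutions. For odometers I expect the obstruction to come from the fact that the conjugation orbit of any involution in $[[T]]'$ under an infinite-order element of $[[T]]'$ cannot produce such a $\mathbb{Z}$-indexed family of commuting independent involutions: the single-tower equicontinuous structure forces either a dependency (a non-trivial finite product collapsing to the identity) or a failure of pairwise commutation once the conjugation shifts the support beyond a controlled neighbourhood. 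One technical realization of this obstruction is to exhibit, at each level $n$, a sign-type homomorphism constraining the joint structure of commuting involutions and tracking the residue of tower-shifts, so that any $\mathbb{Z}$-indexed family must eventually collapse.

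For the backward direction, assume $(X,T)$ is not conjugate to an odometer. Then at some level $n$ the partition $\Xi_n$ has at least two towers, with bases $B_1,B_2$ and heights $h_1,h_2$. First I would construct an infinite-order element $t\in[[T]]'$ by combining induced transformations on the two tower bases so that the $\varphi$-values cancel, and write $t$ as an explicit commutator in $[[T]]$ (of a tower-exchanging permutation and a within-tower shift) to ensure $t\in[[T]]'$. Next I would construct an involution $a\in[[T]]'$ supported in a small clopen region of one tower, engineered so that the conjugates $t^iat^{-i}$ for $i\in\mathbb{Z}$ agree pointwise on their overlapping supports and are independent (no non-trivial finite product reduces to the identity). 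Finally I would verify that $\langle a,t\rangle\cong\mathbb{Z}/2\wr\mathbb{Z}$, completing the embedding.

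The hard part will be constructing the family $\{t^iat^{-i}\}_{i\in\mathbb{Z}}$ in the backward direction. Since $X$ is compact and every element of $[[T]]$ preserves the $T$-invariant probability measures of full support, the supports $t^i(U)$ of the conjugates cannot be pairwise disjoint for all $i\neq j$ --- any non-empty clopen set has positive measure. Consequently the commutation of $t^iat^{-i}$ with $t^jat^{-j}$ must arise from the involutions agreeing on their overlap, not from mere spatial separation. This is exactly where the multi-tower non-odometer structure becomes essential: it provides enough room in the tower geometry to build a coordinate-like system in which $t$ acts as a shift and $a$ as an independent bit-flip, both realized within $[[T]]'$.
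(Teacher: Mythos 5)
Only item (8) actually calls for an argument here --- items (1)--(7) are quoted from the literature and item (4) from Remark \ref{RemarkInfiniteAbelianGroup} --- and (8) is precisely the part the paper states without a written proof, so your sketch has to stand on its own; as it stands, both directions have genuine gaps. For the forward direction you never produce the obstruction: the ``sign-type homomorphism constraining commuting involutions'' is not defined, and even if it were, ruling out one particular pattern of generators $t, a_i$ is not the same as ruling out an arbitrary embedding of the lamplighter group. The efficient argument is structural and is already available from the paper's own material: for an odometer the canonical partitions are single towers, so (Example \ref{ExampleOdometer}, or Theorem \ref{TheoremRepresentation}) $[[T]]$ is an increasing union of subgroups $G_n\cong \mathbb{Z}^{q_n}\rtimes \mathrm{Sym}(q_n)$; any finitely generated subgroup lies in a single $G_n$, which is virtually abelian, whereas the lamplighter is finitely generated of exponential growth and hence not virtually abelian. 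Your orbit-of-involutions analysis does not reach, or replace, an argument of this kind.

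The backward direction has a more serious defect: the only consequence of ``not an odometer'' that your construction consumes is ``at some level $\Xi_n$ has at least two towers, with bases $B_1,B_2$ and heights $h_1,h_2$.'' That data is also available for odometers: for the $2$-odometer the clopen set $[00]\cup[11]$ has first-return time taking the values $1$ and $3$, giving a Kakutani--Rokhlin partition with two towers of different heights, and one can arrange a nested generating sequence of such multi-tower partitions satisfying conditions (1)--(5) of Section \ref{SectionPreliminaries}. Hence any construction that uses only the two-tower data would embed the lamplighter into the full group of an odometer as well, contradicting the forward direction; the non-odometer hypothesis has to enter through something sharper, e.g.\ the existence of a clopen set whose itinerary coding under $T$ is aperiodic (equivalently, an infinite minimal subshift factor). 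This is exactly what is missing at the step you yourself flag as the hard part: you need the conjugates $t^iat^{-i}$ to commute \emph{and} to generate $\bigoplus_{\mathbb{Z}}\mathbb{Z}/2\mathbb{Z}$ with no relations, and ``agreement on overlapping supports,'' which is all your plan guarantees, does not prevent a nontrivial product of distinct conjugates from collapsing to the identity (nothing in the outline even prevents $t^iat^{-i}=t^jat^{-j}$). Your measure-theoretic remark that the supports cannot be made pairwise disjoint correctly identifies the difficulty, but the proposal offers no mechanism to resolve it, so neither implication of (8) is established.
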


 H.~Matui \cite{Matui:2006} was apparently the first author who obtained purely algebraic results in the theory of  full groups of Cantor systems. We notice, though, that the existence of   connections between  properties of full groups and those of underlying dynamical systems could already be seen from Dye's type results on orbit equivalence, see \cite{GiordanoPutnamSkau:1999} and the  aforementioned references.

In the first version of the present paper, we conjectured that the topological full group for a minimal Cantor system is amenable.  K.~Juschenko and N.~Monod \cite{JuschenkoMonod} have recently announced an affirmative solution of our conjecture.

\begin{theorem}[\cite{JuschenkoMonod}] The topological full group of a Cantor minimal system is amenable.
\end{theorem}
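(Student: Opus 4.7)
The strategy I would follow is that of Juschenko--Monod, whose key insight is that $[[T]]$ can be studied via its natural action on a single $T$-orbit. Fix $x_0\in X$ and identify the orbit $\mathcal{O}(x_0)=\{T^n x_0 : n\in\mathbb{Z}\}$ with $\mathbb{Z}$ via $n\leftrightarrow T^n x_0$. An element $S\in[[T]]$ with continuous integer-valued cocycle $f_S$ then acts on $\mathbb{Z}$ by the permutation $\sigma_S(n)=n+f_S(T^n x_0)$. Because $f_S$ is a continuous $\mathbb{Z}$-valued function on the compact set $X$, it is bounded, so each $\sigma_S$ has uniformly bounded displacement on $\mathbb{Z}$; minimality makes the resulting homomorphism $[[T]]\to\mathrm{Sym}(\mathbb{Z})$ injective. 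The faithful action on $\mathcal{O}(x_0)$ is the geometric object we will analyze.

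The central notion I would introduce is \emph{extensive amenability}: an action of a group $G$ on a set $Y$ is extensively amenable when the induced action on the set $\mathcal{P}_f(Y)$ of finite subsets of $Y$ (an abelian group under symmetric difference) admits a $G$-invariant mean. The abstract result from \cite{JuschenkoMonod} I would invoke is: if $G$ acts extensively amenably on $Y$ and, for any finite $F\subset Y$, the pointwise stabilizer $G_F$ is amenable, then $G$ is amenable. For $G=[[T]]$ acting on $Y=\mathcal{O}(x_0)$, the stabilizers are handled using Theorem \ref{TheoremPropertiesFullGroups}(2): pointwise stabilizers of points in the orbit sit inside the locally finite groups $[[T]]_x$, and finite intersections of locally finite groups remain locally finite, hence amenable.

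The technical heart of the argument is the verification of extensive amenability. Here I would use the random-walk criterion (Kaimanovich, as reworked by Juschenko--Monod): the action of a finitely generated group on $Y$ is extensively amenable whenever the simple random walk on the corresponding Schreier graph is recurrent. In our situation, every finitely generated subgroup $H\leq[[T]]$ acts on $\mathbb{Z}$ by permutations of uniformly bounded displacement, so the Schreier graph of $H\curvearrowright\mathbb{Z}$ embeds quasi-isometrically into $\mathbb{Z}$; in particular it has at most linear volume growth, and the simple random walk on it is recurrent (for instance by the Nash--Williams test). Recurrence then yields an $H$-invariant mean on $\mathcal{P}_f(\mathbb{Z})$, and an ultralimit over the directed family of finitely generated subgroups of the countable group $[[T]]$ produces a $[[T]]$-invariant mean.

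I expect the main obstacle to be the abstract passage from recurrence of random walks on Schreier graphs to an actual invariant mean on the lamplighter-type space $\mathcal{P}_f(\mathbb{Z})$. This is not a routine application of F\o{}lner or Reiter type arguments; it requires the genuinely new probabilistic reduction of Juschenko--Monod, which replaces classical convexity-and-averaging reasoning by an inductive construction on finitely supported configurations driven by random-walk recurrence. A secondary subtlety is ensuring that the amenability of stabilizers really does combine with extensive amenability to give amenability of the full group; this demands the general principle that amenability is preserved under extensions whose quotient admits an invariant mean, applied to the semidirect product $\mathcal{P}_f(\mathbb{Z})\rtimes[[T]]$ rather than a classical short exact sequence.
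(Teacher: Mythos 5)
The paper does not prove this statement at all --- it is quoted from \cite{JuschenkoMonod} --- so the only meaningful comparison is with the Juschenko--Monod argument, and your outline does identify its main ingredients correctly: the faithful action of $[[T]]$ on a single orbit $\mathcal O(x_0)\cong\mathbb Z$ by permutations of bounded displacement (faithfulness from minimality), extensive amenability of that action obtained from recurrence of bounded-step random walks on the Schreier graphs of finitely generated subgroups (linear growth, Nash--Williams), and a reduction of amenability of $[[T]]$ to amenability of suitable stabilizers, with the passage to the full countable group by a limit over finitely generated subgroups. Those parts are sound.

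The gap is in the stabilizer step, and it is a genuine one. The pointwise stabilizer in $[[T]]$ of a point of the orbit, or of a finite subset of the orbit, is \emph{not} contained in any of the groups $[[T]]_x$, so Theorem \ref{TheoremPropertiesFullGroups}(2) does not apply to it: $[[T]]_x$ consists of elements preserving the forward half-orbit \emph{as a set}, whereas an element can fix $x_0$ and still swap forward and backward orbit points. For instance, take a small clopen $U\ni T^5x_0$ with $U\cap T^{-8}U=\emptyset$ and $x_0\notin U\cup T^{-8}U$, and let $S=T^{-8}$ on $U$, $S=T^{8}$ on $T^{-8}U$, $S=\mathrm{id}$ elsewhere; then $S$ fixes $x_0$ but sends $T^5x_0$ to $T^{-3}x_0$, so $S\notin[[T]]_{x_0}$. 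In fact the pointwise stabilizer of a point of the orbit is a huge subgroup whose amenability is essentially equivalent to the theorem itself (the Schreier graph of the orbit has linear growth, hence carries an invariant mean, so amenability of the point stabilizer would already give amenability of $[[T]]$); it certainly cannot be fed into your abstract principle as a known input. What Juschenko--Monod actually do is apply the extensive-amenability machinery not to finite subsets of the orbit but to the \emph{commensurated} subset $N=\{T^nx_0:n\ge 0\}$: every $S\in[[T]]$ has $S(N)\,\triangle\,N$ finite, the cocycle $S\mapsto S(N)\,\triangle\,N\in\mathcal P_f(\mathbb Z)$ twists the action on $\mathcal P_f(\mathbb Z)$ (equivalently, one acts on the set of subsets commensurable with $N$), the mean furnished by extensive amenability --- which, note, must be required to give full weight to $\{E:E\supseteq F\}$ for every finite $F$, since otherwise the Dirac mass at $\emptyset$ makes the plain ``invariant mean on $\mathcal P_f(Y)$'' condition vacuous --- is automatically invariant for this twisted action, and the stabilizers that matter are the groups $\{S:S(B)=B\}$ with $B\,\triangle\,N$ finite. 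For $B=N$ this is exactly $[[T]]_{x_0}$, and these groups are locally finite by the Giordano--Putnam--Skau argument, i.e.\ precisely by Theorem \ref{TheoremPropertiesFullGroups}(2). With that replacement --- commensurated half-orbit and its setwise stabilizer instead of pointwise stabilizers of finite subsets --- your sketch becomes the actual Juschenko--Monod proof.
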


We  notice that the minimality (or, at least, aperiodicity)  of the dynamical system seems to be crucial for the amenability of topological full groups as, for example, the topological full group of any shift of finite type  contains a free non-abelian subgroup [H.~Matui, private communications].
 We also mention that there are examples of {\it minimal} Cantor $\mathbb Z^2$-actions, whose topological  full groups contain free non-abelian subgroups \cite{Elek_Monod}.

Denote by $AG$ the class of amenable groups and by $SAG$, termed the {\it class of subexponentially amenable groups}, the class of groups containing groups of subexponential growth  and closed under  (I) direct unions and (II) group extensions, see  \cite{grigorchuk:1998} for details. The family $SAG$ can be described by a transfinite induction. Namely, denote by $SAG_0$ the class of groups of subexponential growth. If $\alpha$ is not a limit ordinal, set $SAG_\alpha$ to be the class of groups obtained from $SAG_{\alpha-1}$ either by operation (I) or by operation (II). If $\alpha$ is a limit ordinal, set $SAG_\alpha = \bigcup_{\beta<\alpha}SAG_\beta$.  Then $SAG = \bigcup_\alpha SAG_\alpha$, where $\alpha$ runs over all ordinals, see \cite{Chou:1980} for a related construction of elementary amenable groups.

 \begin{proposition} Let $(X,T)$ be a minimal subshift over a finite alphabet. Then none of the groups $[[T]]$ or $[[T]]'$ is  subexponentially amenable.
 \end{proposition}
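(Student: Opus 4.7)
The strategy is to use the combination of simplicity, finite generation, and the presence of a lamplighter subgroup in $[[T]]'$ to obstruct membership in $SAG$, and then to transfer this obstruction to $[[T]]$ via closure of $SAG$ under subgroups.

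First I would dispose of the dynamical hypothesis needed for part (8) of Theorem \ref{TheoremPropertiesFullGroups}. A minimal subshift $(X,T)$ over a finite alphabet is expansive, while any odometer is equicontinuous on an infinite Cantor set and thus non-expansive; topological conjugacy preserves both properties, so $(X,T)$ is not conjugated to an odometer. By Theorem \ref{TheoremPropertiesFullGroups}(8), $[[T]]'$ therefore contains a copy of the lamplighter group $\mathbb{Z}/2\wr\mathbb{Z}$, which is finitely generated and has exponential growth. Combined with Theorem \ref{TheoremPropertiesFullGroups}(6), which asserts that $[[T]]'$ is finitely generated, this forces $[[T]]'$ itself to have exponential growth. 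In particular $[[T]]'\notin SAG_0$.

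The core of the argument is then a transfinite-induction lemma: every finitely generated simple group that belongs to $SAG$ already belongs to $SAG_0$. The induction is on the least ordinal $\alpha$ with $G\in SAG_\alpha$. For limit $\alpha$ the claim reduces to some $SAG_\beta$ with $\beta<\alpha$. For successor $\alpha$, if $G$ arises from operation (I) as a direct union of members of $SAG_{\alpha-1}$, then finite generation implies $G$ equals one of the members of the union, giving $G\in SAG_{\alpha-1}$. If $G$ arises from operation (II) as an extension $1\to N\to G\to Q\to 1$ with $N,Q\in SAG_{\alpha-1}$, then simplicity of $G$ forces either $N=1$ (so $G\cong Q\in SAG_{\alpha-1}$) or $N=G$ (so $G\in SAG_{\alpha-1}$). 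In either case we descend, and eventually land in $SAG_0$. Combining this lemma with the previous paragraph yields $[[T]]'\notin SAG$, since $[[T]]'$ is simple (Theorem \ref{TheoremPropertiesFullGroups}(5)) and finitely generated, yet not of subexponential growth.

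Finally, to handle $[[T]]$ itself I would invoke closure of $SAG$ under subgroups. This closure is a routine induction over the construction of $SAG$: subgroups of groups of subexponential growth are direct unions of their finitely generated subgroups, each of subexponential growth; subgroups commute with direct unions; and for an extension $1\to N\to G\to Q\to 1$ and $H\le G$, the sequence $1\to H\cap N\to H\to HN/N\to 1$ expresses $H$ as an extension of a subgroup of $Q$ by a subgroup of $N$. Since $[[T]]'\le [[T]]$ and $[[T]]'\notin SAG$, we conclude $[[T]]\notin SAG$. The main conceptual point—and the only real obstacle—is the inductive reduction of finitely generated simple members of $SAG$ to $SAG_0$; the rest is assembling the ingredients supplied by Theorem \ref{TheoremPropertiesFullGroups}.
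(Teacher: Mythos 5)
Your argument is correct and is essentially the paper's proof: exclude $SAG_0$ via the lamplighter subgroup's exponential growth (Theorem \ref{TheoremPropertiesFullGroups}(8)), run a transfinite induction on the least ordinal $\alpha$ with $[[T]]'\in SAG_\alpha$, using finite generation to kill the direct-union/limit cases and simplicity to kill proper extensions, and then pass to $[[T]]$ by closure of $SAG$ under subgroups. You merely spell out two points the paper leaves implicit (a minimal subshift is expansive, hence not an odometer, and the subgroup-closure of $SAG$), so no further comparison is needed.
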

 \begin{proof} It is enough to show that $[[T]]'$ is not subexponentially amenable as the class $SAG$ is closed under passing to subgroups. Assume the converse. Choose the least ordinal $\alpha\geq 0$ with $[[T]]'\in SAG_\alpha$.  Since the lamplighter group has exponential growth, we get that $[[T]]'\notin SA_0$ (Theorem \ref{TheoremPropertiesFullGroups}(8)). Thus, $\alpha>0$.

  As $[[T]]'$ is finitely generated (Theorem \ref{TheoremPropertiesFullGroups}), $\alpha$ cannot be a limit ordinal. So, $[[T]]'$ has to be an extension of a group from $SAG_{\alpha-1}$ by a group from the same class. This is impossible in view of the simplicity of the group $[[T]]'$. This contradiction yields the result.
 \end{proof}

\begin{definition}  A group $G$ is called {\it locally embeddable into finite groups} (abbr. LEF) if for every finite set $F\subset G$ there is a finite group $H$ and a map $\varphi : G\rightarrow H$ such that (a) $\varphi$ is injective on $F$ and (2) $\varphi(gh)=\varphi(g)\varphi(h)$ for every $g,h\in F$.
\end{definition}

The notion of an LEF group was introduced by Vershik and Stepin in 1980's as a group property equivalent to the existence of  uniform free  approximations for   group actions.  We refer the reader to  \cite{VershikGordon:1997} and \cite[Chapter 7]{CeccheriniCoornaert} for  a detailed exposition of the theory of LEF groups, see also references therein for complete historical  information.  The proof of the following result is presented in Section \ref{SectionLEF}.

\begin{theorem}\label{IntroMainResult} The topological full group of any Cantor minimal system is an LEF group.
\end{theorem}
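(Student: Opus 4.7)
The plan is to approximate elements of $[[T]]$ by permutations of atoms of a sufficiently refined Kakutani-Rokhlin partition. Given a finite set $F \subset [[T]]$, I would choose a K-R partition $\Xi_n$ with the property that for every element $S \in F$ and every product $S_1 S_2$ with $S_1, S_2 \in F$, (a) the cocycle $f_S$ is constant on each atom of $\Xi_n$, and (b) $S$ sends atoms of $\Xi_n$ to atoms of $\Xi_n$. Setting $H = Sym(\Xi_n\textrm{-atoms})$ and defining $\varphi(S) = \sigma_S$, where $\sigma_S$ is the permutation on the finite set of atoms induced by $S$, would then yield the required LEF approximation.

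The verification would reduce to two routine checks. First, $\varphi$ is multiplicative on $F$: for any atom $A$ and any $S_1, S_2 \in F$, the identity $(S_1 S_2)(A) = S_1(S_2(A))$ of homeomorphisms applied to the atom $A$ (which is well-defined since we arranged (a) and (b) for $F \cup F \cdot F$) translates directly into $\sigma_{S_1 S_2} = \sigma_{S_1}\sigma_{S_2}$. Second, $\varphi$ is injective on $F$: if $\sigma_{S_1} = \sigma_{S_2}$, then $S_1(A)$ and $S_2(A)$ are the same atom for every $A$; since an atom $A$ sits at a unique level within a unique tower of $\Xi_n$, the level of its image under $T^{f_{S_i}(A)}$ determines $f_{S_i}(A)$ uniquely (this is where (b) is essential), so $f_{S_1} = f_{S_2}$ and therefore $S_1 = S_2$.

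The main technical obstacle is the construction of the K-R partition $\Xi_n$ realizing (a) and (b). Condition (a) is easy to arrange: each $S \in F \cup F \cdot F$ is defined by a finite clopen partition on which $f_S$ is constant, and since the sequence $\Xi_n$ generates the topology of $X$, for $n$ large enough every atom of $\Xi_n$ is contained in one of these partition pieces. Condition (b) is the delicate point: naively, an atom sitting near the top or bottom of a tower is mapped by $T^k$ to a set that escapes the tower and may split across several atoms of $\Xi_n$. This is precisely where Theorem \ref{TheoremRepresentation} does the real work. That theorem provides, for any finite subset of $[[T]]$, a K-R partition $\Xi_n$ with respect to which every element of the subset decomposes as a permutation of atoms composed with induced transformations of $T$ on clopen sets, and these induced transformations themselves permute the atoms cyclically within towers. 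Applying this to the enlarged finite set $F \cup F \cdot F$ produces a $\Xi_n$ that satisfies (a) and (b) simultaneously, after which the homomorphism check above completes the argument.
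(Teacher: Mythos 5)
There is a genuine gap, and it is exactly at the point you flag as ``the delicate point.'' Condition (b) --- that every $S\in F\cup F\cdot F$ maps atoms of $\Xi_n$ onto atoms of $\Xi_n$ --- cannot be arranged in general, and Theorem \ref{TheoremRepresentation} does not provide it. The rotation part of the decomposition is a product of powers of the induced maps $T_{U(i)}$, $T_{D(j)}$, and these do \emph{not} permute atoms: a point of the atom $T^{h_v^{(n)}-i-1}B_v^{(n)}$ is sent over the roof and lands in whichever tower its first return to the base dictates, so the image of that single atom splits into clopen pieces scattered among the levels $T^{h_w^{(n)}-i-1}B_w^{(n)}$ for various $w$. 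The paper even remarks explicitly that the cocycles of $T_{U(i)}$ and $T_{D(i)}$ need not be compatible with $\Xi_n$. Refining the partition does not cure this: there is always a roof, and the splitting at the roof is the essential obstruction. A concrete impossibility: take $F\ni T$ (which LEF requires you to handle). If $T$ permutes the atoms of a finite clopen partition with $N$ atoms, minimality forces the induced permutation to be a single $N$-cycle, i.e.\ the partition is a clopen cyclic factor $X\to\mathbb Z/N\mathbb Z$; for a topologically weakly mixing minimal subshift no such factor exists with $N>1$, so the only admissible partition is $\{X\}$, and then $\varphi$ is trivial and cannot be injective on any $F$ containing $T$ and one more element. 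So (a) and (b) together are unattainable precisely when the partition is fine enough to be useful.

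The paper's proof is designed around this obstruction rather than through it: one sets $\varphi(Q)=S_Q$, the $n$-permutation part of $Q=S_QR_Q$, and simply discards the rotation. Well-definedness comes from the uniqueness of the factorization (Theorem \ref{TheoremRepresentation}(2)) and injectivity on $F$ from part (3); the real work is multiplicativity, since $QZ=S_QR_QS_ZR_Z=(S_QS_Z)\,(S_Z^{-1}R_QS_ZR_Z)$ and one must show that $S_Z^{-1}R_QS_Z R_Z$ is again an $n$-rotation, so that by uniqueness $S_{QZ}=S_QS_Z$. This uses Theorem \ref{TheoremRepresentation}(iv) (the rotations of the finitely many elements in play are supported on levels within a fixed distance $d$ of the top and bottom) together with (ii) (for $n$ large the permutations act identically on those boundary levels in every tower), which is what makes conjugation of a rotation by a permutation a rotation again. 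Your proposal contains no substitute for this conjugation step, and without it the finite quotient you describe does not exist.
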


We mention that  the LEF property and amenability do not imply each other  \cite{VershikGordon:1997}, \cite{grigorchuk:1998}, and \cite[Chapter 7]{CeccheriniCoornaert}, yet both imply the soficity \cite[Corollary 7.5.11]{CeccheriniCoornaert}. A  group is called {\it sofic} if it embeds into an ultraproduct of a sequence of finite symmetric groups endowed with a normalized  Hamming metric. 

\begin{corollary}  The topological full group $[[T]]$ and its commutator subgroup $[[T]]'$ are sofic.
\end{corollary}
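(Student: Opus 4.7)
The statement is essentially a direct combination of Theorem~\ref{IntroMainResult} with two well-known facts about the class of sofic groups, so the proof proposal is very short and there is no real obstacle.

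First, I would invoke Theorem~\ref{IntroMainResult} to conclude that $[[T]]$ is LEF. Next, I would cite the standard implication ``LEF $\Rightarrow$ sofic'' (this is precisely \cite[Corollary 7.5.11]{CeccheriniCoornaert}, which the authors have already mentioned in the paragraph preceding the corollary). This immediately gives soficity of $[[T]]$ itself.

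For the commutator subgroup, I would appeal to the fact that the class of sofic groups is closed under taking subgroups. This is a basic closure property, provable directly from the definition via ultraproducts of symmetric groups with the Hamming metric (the restriction of a sofic approximation of $G$ to any subgroup $H\leq G$ is still a sofic approximation of $H$), and is recorded in \cite[Chapter 7]{CeccheriniCoornaert}. Since $[[T]]'\leq [[T]]$, soficity of $[[T]]'$ follows.

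The only step with any content is Theorem~\ref{IntroMainResult}, which is proved elsewhere in the paper; the corollary itself requires no further dynamical input. No step is a genuine obstacle.
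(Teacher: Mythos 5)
Your argument is correct and matches the paper's (implicit) reasoning: the corollary is stated immediately after the remark that LEF implies soficity \cite[Corollary 7.5.11]{CeccheriniCoornaert}, so soficity of $[[T]]$ follows from Theorem \ref{IntroMainResult}, and passing to $[[T]]'$ via closure of sofic groups under subgroups (or, equivalently, noting that LEF itself passes to subgroups) is exactly the intended routine step. No gap.
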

We would  like to mention that G.~Elek and N.~Monod suggested a different approach related to some ideas from \cite{JuschenkoMonod}  to obtain the LEF property for the topological full group of {\it minimal  $\mathbb Z$-subshifts} [private communications].

 \begin{remark}  Since there are uncountably many non-isomorphic finitely generated groups $[[T]]'$, we get that  the class $AG\setminus SAG$  contains continuum of simple finitely generated LEF groups. 
 \end{remark}

Each finitely generated LEF group  can be also defined as a limit of finite groups in the topological space of marked groups (or Cayley graphs) introduced in \cite{grigorchuk:1984}.  If a finitely presented group  $G$ is a limit of a sequence of finite groups $\{G_n\}$, then starting from some index $n$ all groups $G_n$ are quotients of $G$.  Thus, finitely presented infinite simple groups are isolated points in the space of marked groups \cite[Section 2]{grigorchuk:2005} and, therefore, cannot have the LEF\footnote{This type of arguments  was used by the first author  to show that all groups of intermediate growth constructed in \cite{grigorchuk:1984} are infinitely presented.} property. Hence, Theorem \ref{IntroMainResult} along with simplicity of $[[T]]'$ imply that  $[[T]]'$, when $T$ is a minimal subshift over a finite alphabet, is a finitely generated, but {\it infinitely presented} group. The latter result was first obtained by H.~Matui by different (dynamical) methods.

\begin{corollary}   For every  minimal subshift $T$ over a finite alphabet, the commutator subgroup $[[T]]'$ is infinitely presented.
\end{corollary}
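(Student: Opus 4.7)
The plan is to combine Theorem \ref{IntroMainResult} with the simplicity and finite generation of $[[T]]'$ via the standard space of marked groups argument sketched in the footnote. First I would observe that the LEF property passes to subgroups: if $H\leq G$ and $G$ is LEF, then for any finite $F\subset H$ the LEF map $\varphi:G\to K$ for $F\subset G$ restricts to an LEF map for $F\subset H$. Applying this to $[[T]]'\leq [[T]]$, together with Theorem \ref{IntroMainResult}, yields that $[[T]]'$ is LEF. By Theorem \ref{TheoremPropertiesFullGroups}(6), since $T$ is a minimal subshift over a finite alphabet, $[[T]]'$ is finitely generated; and by Theorem \ref{TheoremPropertiesFullGroups}(5) it is simple (and of course infinite, e.g.\ by Theorem \ref{TheoremPropertiesFullGroups}(4)).

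Next I would invoke the marked group picture: fix a finite generating set $S\subset [[T]]'$. The LEF property is equivalent to saying that $([[T]]',S)$ is a limit, in the space of marked groups on $|S|$ generators, of a sequence $(H_n,S_n)$ of finite groups with distinguished generating tuples, where the finite group $H_n$ witnesses the LEF condition for an exhaustive sequence of finite subsets. This is the classical reformulation from \cite{grigorchuk:1984}, recorded also in \cite[Chapter 7]{CeccheriniCoornaert}.

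The third step is the isolation argument. Suppose, for contradiction, that $[[T]]'$ were finitely presented, say $[[T]]'=\langle S\mid R\rangle$ with $R$ finite. Then convergence $(H_n,S_n)\to ([[T]]',S)$ in the marked-group topology forces all relations of $R$ to hold in $(H_n,S_n)$ for $n$ large enough, so that for such $n$ the marked group $(H_n,S_n)$ is a quotient of $[[T]]'$. But $[[T]]'$ is simple, hence its only quotients are itself and the trivial group; since $H_n$ is finite and $[[T]]'$ is infinite, we must have $H_n=\{e\}$ for all large $n$. This contradicts injectivity of the LEF maps on larger and larger finite subsets of $[[T]]'$, giving the claim.

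The main (and only real) obstacle is the isolation step, which hinges on the fact that an infinite, finitely presented, simple group is an isolated point in the space of marked groups; all of the dynamical content has been packaged into Theorem \ref{IntroMainResult} and the structural results of Theorem \ref{TheoremPropertiesFullGroups}, so no further dynamics enters the argument.
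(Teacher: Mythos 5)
Your argument is correct and is essentially the paper's own proof: the paper likewise deduces the statement from Theorem \ref{IntroMainResult} together with simplicity and finite generation of $[[T]]'$, using the fact (going back to \cite{grigorchuk:1984}) that a finitely presented infinite simple group is isolated in the space of marked groups and hence cannot be a limit of finite marked groups, i.e.\ cannot be LEF. Your explicit remark that LEF passes to subgroups is the same (implicit) step the paper uses to transfer the LEF property from $[[T]]$ to $[[T]]'$, so there is nothing further to add.
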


In section \ref{SectionLEF}, we will modify the proof of Lemma 4.1 from \cite{Matui:2006} to establish that  the commutator subgroup   can be represented as a product of two locally-finite subgroups.  We notice that groups admitting factorizations $G = A\cdot B$ with $A$ and $B$ being locally finite subgroups have been studied in numerous papers, see, for example, publications of Amberg, Chernikov, Kegel,  Subbotin, Sushchanskyy,  Sysak, and many others, see, for example, \cite{Sushchanskii:1990} and references therein.  To the best of our knowledge, commutators of topological full groups are the first examples of {\it finitely-generated simple groups} that can be factorized into a product of two locally-finite subgroups.
For a point $x\in X$, set $\Gamma_x = [[T]]_x\cap [[T]]'$. Note that each group $\Gamma_x$ is locally finite \cite{GiordanoPutnamSkau:1999}.

\begin{theorem} Let $x,y\in X$ be points with disjoint $T$-orbits. Then $[[T]]' = \Gamma_x\cdot \Gamma_y$.
\end{theorem}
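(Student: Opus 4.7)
Our strategy is to refine the decomposition in Matui's Lemma 4.1 (Theorem \ref{TheoremPropertiesFullGroups}(3)). Since $\mathbb Z$ is abelian we have $[[T]]' \subset Ker(\varphi)$, so that lemma already gives, for any $S \in [[T]]'$, an initial factorization $S = g_0 h_0$ with $g_0 \in [[T]]_x$ and $h_0 \in [[T]]_y$. The question is whether one may arrange $g_0$ and $h_0$ to lie in $[[T]]'$.

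All $[[T]]_x$--$[[T]]_y$ factorizations of a fixed $S$ differ by an element of $[[T]]_x \cap [[T]]_y$: if $S = g_0 h_0 = g h$, then $k := g^{-1} g_0 = h h_0^{-1}$ lies in $[[T]]_x \cap [[T]]_y$ and $(g,h) = (g_0 k^{-1}, k h_0)$. It is therefore enough to exhibit $k \in [[T]]_x \cap [[T]]_y$ whose image in $[[T]]/[[T]]'$ coincides with that of $g_0$; once this is done, $g := g_0 k^{-1}$ lies in $[[T]]_x \cap [[T]]' = \Gamma_x$ and $h := k h_0 = g^{-1} S$ lies in $[[T]]_y \cap [[T]]' = \Gamma_y$, proving $S \in \Gamma_x \cdot \Gamma_y$.

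To produce $k$ we combine the Kakutani--Rokhlin tower analysis of Section \ref{SectionRotationsPermutations} with the assumption that $x$ and $y$ lie on disjoint $T$-orbits. We pick a KR partition $\Xi$ fine enough that $g_0$ and $h_0$ admit the normal form of Theorem \ref{TheoremRepresentation}, and then refine $\Xi$ so that the columns containing $x$ and $y$ are distinct. The coset of $g_0$ modulo $[[T]]'$ is captured by a finite amount of combinatorial data attached to $\Xi$; moreover $g_0 h_0 \in [[T]]'$ forces this coset to coincide with that of $h_0^{-1}$, so it is simultaneously representable by data referring only to the $x$-column and by data referring only to the $y$-column. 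Using the remaining ``free'' columns of $\Xi$ we assemble $k$ as a product of atom permutations and induced $T$-transformations whose supports avoid both the $x$- and $y$-columns; such a $k$ acts as the identity on both forward orbits (and a fortiori preserves them) and hence lies in $[[T]]_x \cap [[T]]_y$, while carrying the required abelianization class.

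The main obstacle is this last step: realizing a prescribed coset of $[[T]]'$ by a transformation whose support avoids both distinguished orbits. Disjointness of the orbits of $x$ and $y$ is precisely what guarantees that $\Xi$ can be refined to expose enough unused columns to accommodate the combinatorial data encoding the coset of $g_0$, and the Representation Theorem reduces the construction of $k$ inside those auxiliary columns to a finite bookkeeping problem of matching permutations and induced-transformation indices.
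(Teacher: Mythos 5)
Your reduction is sound as far as it goes: starting from Matui's factorization $S=g_0h_0$ with $g_0\in[[T]]_x$, $h_0\in[[T]]_y$, it would indeed suffice to produce $k\in[[T]]_x\cap[[T]]_y$ with $k\equiv g_0\pmod{[[T]]'}$. But that existence statement is precisely the hard content of the theorem, and your proposal does not prove it. The justification offered fails at two points. First, the claim that a $k$ whose support avoids the $x$- and $y$-columns ``acts as the identity on both forward orbits'' is false: by minimality every orbit is dense, hence meets every tower (indeed every nonempty clopen set), and the only homeomorphism equal to the identity on a dense set is the identity. Membership in $[[T]]_x\cap[[T]]_y$ means \emph{setwise} preservation of the two forward half-orbits, and securing it requires controlling crossings near the points $x$ and $y$ themselves (support disjoint from a finite orbit segment around each point, no motion over tops/bases of the towers), not disjointness from two distinguished columns. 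Moreover, any $k\in[[T]]_x\cap[[T]]_y$ lies in a locally finite group, so it has finite order and $\varphi(k)=0$, which is incompatible with building $k$ from induced transformations ($\varphi(T_A)=1$, and already $T_{A}T_{B}^{-1}$ has infinite order, Remark \ref{RemarkInfiniteAbelianGroup}). Second, the assertion that the coset of $g_0$ modulo $[[T]]'$ is ``captured by a finite amount of combinatorial data attached to $\Xi$'' and can then be realized in ``free'' columns is exactly the question of which classes of the abelianization $[[T]]/[[T]]'$ are hit by $[[T]]_x\cap[[T]]_y$; deciding when two elements are congruent mod $[[T]]'$ amounts to Matui's computation of the abelianization (index plus a signature-type invariant), which is a substantial result not available in this paper and certainly not ``finite bookkeeping.'' So the crucial step is missing.

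For comparison, the paper's proof avoids the abelianization entirely. It first establishes the combinatorial index formula $\varphi(Q)=a(Q)-b(Q)$ counting crossings of the half-orbit of $x$; for $Q\in[[T]]'$ this gives $\mathrm{card}(I^+)=\mathrm{card}(I^-)$, where $I^{\pm}$ record which points of the orbit of $x$ are carried by $Q$ across the cut at $x$. One then writes down an explicit involution $P_2$ supported on finitely many disjoint translates $T^n(Y)$ of a small clopen set $Y=C\sqcup T^p(C)$ containing $x$, chosen to miss a neighborhood of the orbit segment of $y$, which pairs off $I^+$ with $I^-$. Being a product of two conjugate involutions (over $C$ and over $T^pC$), $P_2$ is a commutator, hence $P_2\in\Gamma_y$; and by construction $QP_2^{-1}$ no longer crosses at $x$, so $QP_2^{-1}\in\Gamma_x$ and $Q=(QP_2^{-1})P_2$. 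If you wish to rescue your approach, you would in effect have to carry out this construction (or an equivalent analysis of which cosets of $[[T]]'$ meet $[[T]]_x\cap[[T]]_y$), so the direct argument is both shorter and self-contained.
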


\begin{definition} (1) A group $G$ is said to {\it satisfy a group law} if there exists $k\geq 1$ and a word $w\in \mathbb F_k$, free group of rank $k$, such that $w(g_1,\ldots,g_k) = 1$ for any $g_1,\ldots,g_k\in G$.

(2) If $G$ acts on a set  $Y$, $G$ is said to {\it separate} $Y$ if for any finite set $C\subset Y$, the pointwise stabilizer $st_G(C)$ does not stabilize any other point outside $C$.
\end{definition}

 In \cite{Abert:2005} M.~Abert shows that if a permutation group $G$ separates $Y$, then $G$ does not satisfy any group law. We notice that the topological full group $[[T]]$ separates $X$ since for any clopen set $O$ and a point $x\subset O$ there is $\gamma \in [[T]]'$ with $supp(g)\subset O$ and $g(x)\neq x$, see, for example, \cite[Proposition 2.4]{Medynets:2011}. Thus, we immediately get the following result.

 \begin{proposition} The topological full group $[[T]]$  satisfies no group law.
 \end{proposition}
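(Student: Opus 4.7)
The plan is to deduce this directly from Abert's theorem cited just before the statement: if a permutation group $G$ acts on $Y$ and separates $Y$, then $G$ satisfies no group law. Since the setup already points at \cite[Proposition 2.4]{Medynets:2011}, the only real task is to verify that $[[T]]$ separates $X$ in the sense of the definition.

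To verify separation, I would take a finite set $C \subset X$ and a point $y \in X \setminus C$, and produce an element of $st_{[[T]]}(C)$ that moves $y$. Since $X$ is a Cantor set (hence Hausdorff and zero-dimensional) and $C$ is finite, I can pick a clopen neighborhood $O$ of $y$ with $O \cap C = \emptyset$. By \cite[Proposition 2.4]{Medynets:2011} (quoted in the paragraph above the statement), there exists $g \in [[T]]'$ with $\mathrm{supp}(g) \subset O$ and $g(y) \neq y$. Because $\mathrm{supp}(g) \subset O$ and $O$ is disjoint from $C$, the element $g$ fixes every point of $C$; hence $g \in st_{[[T]]}(C) \subset [[T]]$, while $g(y) \neq y$. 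Thus $st_{[[T]]}(C)$ fails to stabilize the arbitrary point $y \notin C$, which is exactly the separation property of $[[T]]$ acting on $X$.

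With separation established, Abert's result from \cite{Abert:2005} immediately implies that $[[T]]$ satisfies no group law, completing the proof.

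There is no real obstacle: the entire argument is a one-line reduction to Abert's theorem once the separation property is checked, and the check itself is a routine application of zero-dimensionality of $X$ combined with the cited fact that $[[T]]'$ contains elements of arbitrarily small clopen support that move a prescribed point. Since the needed separation already involves only $[[T]]'$, the same proof actually shows the stronger statement that $[[T]]'$ itself satisfies no group law, which one might wish to record as a remark.
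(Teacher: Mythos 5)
Your proposal is correct and follows exactly the paper's route: verify the separation property of $[[T]]$ acting on $X$ via the cited fact that $[[T]]'$ contains elements of arbitrarily small clopen support moving a prescribed point, then invoke Abert's theorem from \cite{Abert:2005}. Your write-up simply makes explicit the choice of a clopen neighborhood of $y$ disjoint from the finite set $C$, which the paper leaves implicit, and your remark that the same argument applies to $[[T]]'$ is consistent with the paper's setup.
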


\bigskip  Consider the space $\textrm{SUB}_T$ of all subgroups of $[[T]]$ (identified with a closed subspace of $\{0,1\}^{[[T]]}$ via characteristic functions).   Recall that any group acts on the set of its subgroups by conjugation. This action will be referred to as {\it adjoint}. 

  \begin{definition} A probability   measure on the space of all subgroups invariant with respect to the adjoint action is called an {\it invariant random subgroup}.
  \end{definition}

An action of a group $G$ on the set $X$ is called {\it completely non-free} if for any two distinct points $x,y\in X$, their pointwise stabilizers $st_G(x)$ and $st_G(y)$ are different subgroups of $G$. We note that the actions of $[[T]]$ and $[[T]]'$ on $X$ are completely non-free. 

  \begin{proposition} Let $(X,T)$ be a Cantor minimal system. Then both groups $[[T]]$ and $[[T]]'$ have  nonatomic (ergodic) invariant random subgroups. 
  \end{proposition}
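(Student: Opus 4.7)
The plan is to build both invariant random subgroups as pushforwards, under the stabilizer map, of an ergodic $T$-invariant Borel probability measure on $X$. By the Krylov--Bogolyubov theorem, the set $M_T(X)$ of $T$-invariant Borel probability measures on $X$ is a nonempty Choquet simplex; I pick any extremal (hence $T$-ergodic) element $\mu \in M_T(X)$. Because $T$ is aperiodic as a minimal homeomorphism of the infinite space $X$, any atom of $\mu$ would have infinitely many $T$-translates of equal positive mass, which is impossible, so $\mu$ is nonatomic. Definition \ref{DefinitionFullGroups} tells us that every $S \in [[T]]$ agrees piecewise with a power of $T$ on a finite clopen partition and therefore preserves $\mu$; thus $\mu$ is invariant under the whole of $[[T]]$ and a fortiori under $[[T]]'$.

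For $G \in \{[[T]], [[T]]'\}$ I introduce the stabilizer map
\[
\Phi_G \colon X \longrightarrow \textrm{SUB}_T, \qquad \Phi_G(x) = \{g \in G : gx = x\}.
\]
It is Borel, since for each $g \in G$ the fixed-point set $\{x : gx = x\}$ is clopen, and it is $G$-equivariant with respect to the adjoint action because $\Phi_G(gx) = g\,\Phi_G(x)\,g^{-1}$. Hence $\nu_G := (\Phi_G)_\ast \mu$ is a Borel probability measure on $\textrm{SUB}_T$ invariant under the adjoint action of $G$, that is, an invariant random subgroup of $G$.

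The complete non-freeness of the $G$-action on $X$, recorded just before the proposition, is exactly the statement that $\Phi_G$ is injective; so $\nu_G$ inherits the nonatomicity of $\mu$. For ergodicity of $\nu_G$ under the adjoint action, any Borel conjugation-invariant set $A \subseteq \textrm{SUB}_T$ pulls back to a $G$-invariant set $\Phi_G^{-1}(A) \subseteq X$: when $G = [[T]]$ this set is $T$-invariant (as $T \in G$), and $T$-ergodicity of $\mu$ forces $\nu_G(A) \in \{0,1\}$. The main obstacle is the analogous ergodicity step for $G = [[T]]'$, because $T$ itself need not belong to the commutator subgroup. I would bridge this gap by showing that the $[[T]]'$-orbit of every point of a Cantor minimal system coincides with its $T$-orbit, produced by swapping involutions $\tau \in [[T]]'$ of the form $\tau|_U = T^n$, $\tau|_{T^n U} = T^{-n}$, $\tau = \mathrm{id}$ elsewhere, constructed on a small clopen neighborhood $U \ni x$ with $U \cap T^n U = \emptyset$ and written explicitly as products of commutators using the structural Theorem \ref{TheoremRepresentation}.
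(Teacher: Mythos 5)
Your proposal is correct and takes essentially the same route as the paper: both push forward an ergodic $T$-invariant measure under the injective, conjugation-equivariant stabilizer map $x\mapsto st_G(x)$, using complete non-freeness (and freeness of the $\mathbb Z$-action, which makes fixed-point sets clopen) to get nonatomicity. Your added care with ergodicity for $[[T]]'$ is welcome where the paper just says ``verbatim''; note that the coincidence of $[[T]]'$-orbits with $T$-orbits is best justified by the product-of-two-conjugate-involutions trick appearing in the proof of Theorem \ref{TheoremProductLocallyFiniteGroups}, rather than by Theorem \ref{TheoremRepresentation}.
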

  \begin{proof}   Since distinct points have different stabilizers, the map $\alpha : X\rightarrow \textrm{SUB}_T$ given by $\alpha(x) = st_{[[T]]}(x)$ is injective. We notice that any element $g \in st_{[[T]]}(x)$ also stabilizes some neighborhood of $x$ as the action of the group $\{T^n\}$ on $X$  is free. Hence    the map $\alpha : X\rightarrow \textrm{SUB}_T$  is a homeomorphism on its image, see  \cite[Lemma 5.4]{Vorobets:arxiv}. This implies that the dynamical systems $(\alpha(X),[[T]])$ and $(X,[[T]])$ are conjugate. So any $T$-invariant measure on $X$ gives rise to a random subgroup on $[[T]]$. The proof for the group $[[T]]'$ is verbatim. A $T$-ergodic measure will produce an ergodic random subgroup.
  \end{proof}
  
  Since   the group $[[T]]'$ is simple, it has no atomic invariant subgroups except for $\{id\}$. The group $[[T]]'$, on the contrary, has normal subgroups $[[T]]'$ and $Ker(\varphi)$, see Theorem \ref{TheoremPropertiesFullGroups}. Thus, Dirac  measures supported by these subgroups are non-trivial invariant random subgroups of $[[T]]$.

  \medskip The following observation  was suggested by Mark Sapir. Recall that a {\it universal sentence} in a first order language $\mathcal L$ is any sentence of the form $\forall x_1,\ldots,\forall x_k \Phi$, where $\Phi$ is a quantifier free formula. The {\it universal theory} for a class of groups $\mathcal K$ is the family of all universal sentences that are valid in all groups from $\mathcal K$. The universal theory of the class $\mathcal K$ is denoted by $\textrm{Th}_\forall(\mathcal K)$. A group $G$ is called a {\it model} of a set of sentences  $W$ if every sentence from $W$ holds in $G$. We recall that a group $G$  is LEF if and if it is embeddable into an ultraproduct of finite groups \cite[Theorem 7.2.5]{CeccheriniCoornaert}. The latter is a
 model for $\textrm{Th}_\forall(\mathcal F)$, where $\mathcal F$ is the class of finite groups. 

\begin{proposition}  For any minimal Cantor system $(X,T)$   the universal theory of $[[T]]'$  coincides with  the universal theory  of the class of finite groups   and hence is undecidable.
\end{proposition}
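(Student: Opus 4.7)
The plan is to establish the two inclusions $\mathrm{Th}_\forall(\mathcal F)\subseteq \mathrm{Th}_\forall([[T]]')$ and $\mathrm{Th}_\forall([[T]]')\subseteq \mathrm{Th}_\forall(\mathcal F)$, and then invoke a classical result of A.~Slobodskoi (Algebra and Logic, 1981) which asserts that the universal theory of the class of finite groups is undecidable.

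For the inclusion $\mathrm{Th}_\forall(\mathcal F)\subseteq \mathrm{Th}_\forall([[T]]')$, I first observe that the LEF property is inherited by subgroups: given a finite set $F\subset [[T]]'\subset [[T]]$, the LEF map for $[[T]]$ from Theorem \ref{IntroMainResult} restricted to $F$ witnesses the LEF property for $[[T]]'$. Hence, by the ultraproduct characterization of LEF groups recalled just before the statement, $[[T]]'$ embeds into an ultraproduct $\prod_i F_i/\mathcal U$ of finite groups $F_i$. By \L{}o\'s's theorem, this ultraproduct satisfies every sentence in $\mathrm{Th}_\forall(\mathcal F)$. Since a universal sentence $\forall x_1\cdots\forall x_k\,\Phi(x_1,\ldots,x_k)$ with $\Phi$ quantifier-free is preserved under subgroups (quantifier-free formulas have the same truth value on elements of a subgroup as in the ambient group), it follows that $[[T]]'$ satisfies every universal sentence in $\mathrm{Th}_\forall(\mathcal F)$.

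For the reverse inclusion, I use Theorem \ref{TheoremPropertiesFullGroups}(4): every finite group embeds into $[[T]]'$. If $\forall\bar x\,\Phi(\bar x)$ is a universal sentence valid in $[[T]]'$, then by the same subgroup-preservation principle it is valid in every finite subgroup of $[[T]]'$, hence in every finite group up to isomorphism. This yields $\mathrm{Th}_\forall([[T]]')\subseteq \mathrm{Th}_\forall(\mathcal F)$. Combining both inclusions gives $\mathrm{Th}_\forall([[T]]')=\mathrm{Th}_\forall(\mathcal F)$, and Slobodskoi's theorem then implies that this common theory is undecidable.

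There is no genuine obstacle here: once Theorems \ref{IntroMainResult} and \ref{TheoremPropertiesFullGroups}(4) are in hand, the argument is a short formal combination of two standard model-theoretic preservation facts (universal sentences pass to subgroups and to ultraproducts of models) together with Slobodskoi's deep undecidability result. The only points requiring verbal care are the routine observations that LEF is inherited by subgroups and that the preservation of universal sentences under subgroups applies equally to finite and to ultraproduct-type ambient groups.
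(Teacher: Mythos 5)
Your proposal is correct and follows essentially the same route as the paper: the inclusion $\mathrm{Th}_\forall([[T]]')\subseteq \mathrm{Th}_\forall(\mathcal F)$ via the embedding of every finite group into $[[T]]'$ (Theorem \ref{TheoremPropertiesFullGroups}(4)), the reverse inclusion via the LEF property and the ultraproduct characterization, and Slobodskoi's theorem for undecidability. The extra details you supply (LEF passing to subgroups, \L{}o\'s's theorem, preservation of universal sentences under substructures) are exactly the routine facts the paper leaves implicit.
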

\begin{proof}   For any Cantor minimal system $(X,T)$, the commutator subgroup $[[T]]'$ contains any finite group (Theorem \ref{TheoremPropertiesFullGroups}).  Thus, $\textrm{Th}_\forall([[T]]')\subset \textrm{Th}_\forall(\mathcal F)$. On the other hand, since the group $[[T]]'$  is LEF, it can be embedded into an ultraproduct of finite groups. Hence,  $\textrm{Th}_\forall(\mathcal F) \subset \textrm{Th}_\forall( [[T]]')$. 

The  result of Slobodskoi \cite{Slobodskoi:1981} about the undecidability of the universal theory for finite groups completes the proof. 
\end{proof}

We expect that the word problem for  $[[T]]'$ is decidable if  the system $(X,T)$ can be  defined effectively.

%%%%%%%
%
%

%%%%%%%%%
% Preliminaries

\section{Kakutani-Rokhlin Partitions} \label{SectionPreliminaries}

In this section we fix our notations and introduce necessary definitions from the theory of Cantor dynamical systems.  An interested reader may also consult the papers   \cite{HermanPutnamSkau:1992},  \cite{giordano_putnam_skau:1995}, \cite{GlasnerWeiss:1995}, \cite{GiordanoPutnamSkau:1999},  \cite{BezuglyiKwiatkowski:2000}, \cite{Matui:2006}, and \cite{BezuglyiMedynets:2008} for more results on Cantor dynamics.

We will sometimes need to use a {\it diameter} of  a set. By this we mean that  the diameter is defined by some  metric on $X$ compatible with the topology.
 If $d$ is a metric  on $X$ and $\mathcal U$ is an open cover of $X$, then, due to the compactness of $X$, there exists a number $\delta>0$ such that any set $A\subset X$ with $d$-diameter less than $\delta$ is contained in at least one member of the cover $\mathcal U$. The number $\delta$ is called the {\it Lebesgue number of the cover $\mathcal U$}.

\begin{definition} Consider a clopen set $B$  such that the  sets $$\xi =\{B,TB,\ldots, T^{n-1}B\}$$ are disjoint. The family $\xi$ is called a {\it $T$-tower with  base $B$ and height $n$}. A clopen partition of $X$ of the form $$\Xi = \{T^iB_v : 0\leq i\leq h_v-1, \; v=1,\ldots, q\}$$ is called a {\it Kakutani-Rokhlin partition}.  The sets $\{T^iB_v\} $
 are called {\it atoms} of the partition $\Xi$.
\end{definition}

Fix an arbitrary clopen set $A\subset X$. Define a function $t_A : A \rightarrow \mathbb N$ by setting $$t_A(x) = \min \{ k \geq 1 : T^kx\in A\}.$$ Using the minimality of $T$, one can check that half-orbits $\{T^nx : n\geq 0\}$, $x\in X$, are dense in $X$.  This shows that the function $t_A$ is well-defined and takes on finite values. Since $T$ is a homeomorphism of a Cantor set, the function  $t_A$ is continuous and, hence, is bounded.  Sometimes, $t_A$ is referred to as the {\it function of the first return.}

Denote by $K$ the set of all integers $k\in \mathbb N$ such that the set $A_k = \{x\in A : t_A(x) = k\}$ is non-empty.   It follows from the continuity of $t_A$ that the set $K$ is finite and $A= \bigsqcup _{k\in K} A_k$ is a clopen partition.  The definition of the function $t_A$ implies that, for every $k\in K$, the sets $ \{A_k,TA_k,\ldots, T^{k-1}A_k\}$ are disjoint.  Indeed if $T^iA_k\cap T^jA_k \neq \emptyset$ for $0\leq i < j\leq k-1$, then $A_k\cap T^{j - i}A_k\neq \emptyset$. It follows that there exists $x\in A_k$ with $T^{j-i}x\in A_k$. Hence $ k = t_A(x)\leq j-i\leq k-1$, which is a contradiction.  Similarly, one can check that the family $$\Xi = \{T^iA_k : 0\leq i\leq k-1,\;k\in K\}$$ consists of disjoint sets and
$$X = \bigsqcup _{k\in K}\bigsqcup_{i = 0}^{k-1}T^iA_k.$$ Thus, $\Xi$  is a Kakutani-Rokhlin partition of $X$.

\begin{definition}

The union of the sets $B(\Xi) = \bigsqcup_{k\in K}A_k$ is called the {\it base of the partition $\Xi$} and the union of the top levels $H(\Xi) = \bigsqcup_{k\in K} T^{k-1}A_k$ is called the {\it top or roof of the partition}. \end{definition}

\begin{remark}\label{RemarkKakutaniRokhlin} (1) A convenient way to look at Kakutani-Rokhlin partitions is by tracing the trajectories of points. Namely, if $x\in A_k$, for some $k\in K$, then $x$ moves to the level $TA_k$ under the action of $T$. Consequently applying $T$ to the point $x$, we see that $x$ is moving up until it gets on the  top level, i.e. $T^{k-1}x\in  H(\Xi)$. When we  apply $T$ once again,   the point $T(T^{k-1}x)$ returns back to the set $A = \bigsqcup _{i\in K} A_i$.  However, we cannot predict the exact set $A_i$, $i\in K$,    the point $T(T^{k-1}x) = T^kx$ gets into.

(2) Note that $T(H(\Xi)) = B(\Xi)$.

(3)  Let $B$ be an arbitrary clopen set. We can partition each set $A_k$, $k\in K$, into a finite number of clopen subsets $\{C_{j,k}\}$, $j=0,\ldots,p_k$, so that each set $T^iC_{j,k}$, $0\leq i\leq k-1$, is either disjoint from $B$ or is a subset of $B$. Thus, $\Xi' = \{T^iC_{j,k}: k\in K, j=0,\ldots,p_k,i =0,\ldots,k-1\}$ is a clopen partition {\it refining} the partition $\Xi$ and the set $B$.

\end{remark}

Fix a point $x_0\in X$. Choose a decreasing sequence of clopen sets $\{E_n\}_{n\geq 1}$ with $\bigcap_{n\geq 1}E_n = \{x_0\}$. Denote by $\Xi_n$ the clopen partition constructed by the function $t_{E_n}$ as above. Refining partitions $\Xi_n$ as in Remark \ref{RemarkKakutaniRokhlin}(3) if needed, we can assume that the sequence $\{\Xi_n\}_{n\geq 1}$ satisfies the following conditions.

 \begin{enumerate}

 \item The partitions $\{\Xi_n\}_{n\geq 1}$ generate the topology of $X$.

 \item The partition $\Xi_{n+1}$ refines $\Xi_n$.

 \item  $\bigcap_{n\geq 1} B(\Xi_n) = \{x_0\}$ and $B(\Xi_{n+1})\subset B(\Xi_n)$ for every $n\geq 1$.

 \setcounter{saveenumi}{\theenumi}
 \end{enumerate}

Since $\Xi_n$ is a Kakutani-Rokhlin partition, we can represent it as $$\Xi_n = \{T^iB^{(n)}_v : 0\leq i\leq h^{(n)}_v-1,  v=1,\ldots, v_n\}$$  for some clopen set $B^{(n)}_v$ and positive integers $v_n$ and $h_v^{(n)}$, $v \in V_n = \{1,\ldots, v_n\}$. Set  $\xi^{(n)}_v = \{B^{(n)}_v,\ldots, T^{h^{(n)}_v -1} B_v^{(n)}\}$.  Then $\Xi_n$  is a disjoint union of towers $\xi^{(n)}_v$, $v\in V_n$. Note that $h_v^{(n)}$ is the height of the tower $\xi^{(n)}_v$.

Fix a sequence of positive integers $\{m_n\}$ such that $m_n\to\infty$ as $n\to\infty$. Take a subsequence of   $\{\Xi_n\}_{n\geq 1}$ (we will drop an extra subindex) so that each Kakutani-Rokhlin partition $\Xi_n$ additionally meets the following conditions.

 \begin{enumerate}
   \setcounter{enumi}{\thesaveenumi}

  \item    $h_n\geq 2m_n +2$,  where $h_n = \min_{v\in V_n}h^{(n)}_v$.

 \item The sets $T^iB(\Xi_n)$ have the property $$diam(T^iB(\Xi_n))<1/n\mbox{ for }-m_n-1\leq i \leq m_n.\qquad (\ddag)$$
  \end{enumerate}

 \begin{remark} Observe that we  do not need  the minimality of $T$ to get partitions satisfying properties (1)-(4).  Such partitions exist for any aperiodic\footnote{A homeomorphism is called {\it aperiodic} if every orbit is infinite.} homeomorphism \cite{BezuglyiDooleyMedynets:2005}.  However, the condition (5) holds only for minimal  (or, at least, essentially minimal\footnote{A system is called {\it essentially minimal} if it has only one minimal component.}) systems \cite{HermanPutnamSkau:1992}.
 \end{remark}

%Denote by $[T]$ the group of all homeomorphisms of $X$ with the property that $S\in [T]$ if and only if $Sx = T^{f_S(x)}x$ for every $x\in X$, where $f_S: X\rightarrow \mathbb Z$ is some (measurable) function. The group $[T]$ is called the {\it full group of $T$}. The function $f_S$ is called an {\it orbit cocycle} for $S$.

%It was proved in  \cite{GiordanoPutnamSkau:1999} that the group $[T]$ is  a complete invariant of orbit equivalence for the dynamical system $(X,T)$. See also \cite{Medynets:2011}  for a generalization of this result on  arbitrary group actions.

%\begin{remark} Since there are only countably many clopen sets, we get that the group $[[T]]$ is countable.
%\end{remark}

%%%%%%%%%%%%%%
%
%
% ROTATIONS and PERMUTATIONS
\section{Rotations and Permutations}\label{SectionRotationsPermutations}
  In this section, we introduce two kinds of group elements:  permutations (Definition \ref{DefinitionPermutation}) and rotations (Definition \ref{DefinitionRotation}). We then show that every element can be uniquely factored into a product of a permutation and of a rotation (Theorem \ref{TheoremRepresentation}).  The results of this section are inspired by the paper \cite{BezuglyiKwiatkowski:2000}, cf.  \cite[Theorem 3.3]{Medynets:2007}.  Theorem 2.2 of  \cite{BezuglyiKwiatkowski:2000}  describes   how elements of $[[T]]$ move atoms of Kakutani-Rokhlin partitions.
Fix a sequence of partitions $\{\Xi_n\}_{n\geq 1}$ that satisfies the properties (1) -- (5) from Section \ref{SectionPreliminaries}.  We use the same notations as in Section \ref{SectionPreliminaries}.

\begin{definition}\label{DefinitionPermutation} Fix an integer $n\geq 1$.   We say that a homeomorphism $P\in [[T]]$ is an {\it $n$-permutation} if (1) its orbit cocycle $f_P$ is compatible (constant on atoms) with the partition $\Xi_n$ and (2) for any point $x\in T^i B_v^{(n)}$ ($0\leq i\leq h^{(n)}_v-1$, $v\in V_n$) we have that $0\leq f_P(x) + i \leq h_v^{(n)}-1$. The latter condition means that $P$ permutes atoms only within each tower without moving points over the top or the base of the tower. We will  call $P$ just a {\it permutation} when the partition $\Xi_n$ is clear from the context.
\end{definition}

 Recall that the set $V_n = \{1,\ldots,v_n\}$ stand for the index set  enumerating $T$-towers of $\Xi_n$. Then each permutation $P$ can be uniquely factored into a product of permutations $P_1,\ldots,P_{v_n}$ such that $P_i$ acts only within the tower $\xi_i^{(n)}$, $i=1,\ldots, v_n$.

\begin{definition} Fix a clopen set $A$. Let $t_A$ be the function of the first return to the set $A$. Define a homeomorphism $T_A$ by $T_A(x) = T^{t_A(x)}x$, when $x\in A$, and $Tx=x$ if otherwise. The homeomorphism $T_A$ belongs to $[[T]]$ and is called an {\it induced transformation of $T$}.
\end{definition}

\begin{remark}\label{RemarkInfiniteAbelianGroup} (1) Note that the minimality of $T$ implies the minimality of the induced transformation $T_A$ on $A$.

(2) Since minimal homeomorphisms have no periodic points,   the group generated by the induced transformation $T_A$ is isomorphic to $\mathbb Z$.  Choose a sequence of disjoint clopen sets $\{A_n\}_{n\geq 1}$ such that $A_n = B_n'\sqcup B_n''$ with $B_n'' = T^{q_n}(B_n)$ for some $q_n$. Then the element $Q_n = T_{B_n'}\cdot T_{B_n''}^{-1}$ belongs to $[[T]]'$ as $T_{B_n'}$ and $T_{B_n''}$ are conjugate in $[[T]]$. Hence,  the subgroup of $[[T]]'$ generated by $\{Q_n\}_{n\geq 1}$ is isomorphic to the infinite direct sum   $ \bigoplus_{i=1}^\infty\mathbb Z$.

(3)   Given an integer $n>0$, choose a clopen set $U$ such that the sets $\mathcal F = \{U,TU,
\ldots, T^{n-1}U\}$ are disjoint. Then using powers of $T$, we can embed a symmetric group $Sym(n)$ into $[[T]]$ by acting on $
\mathcal F$. In particular, any finite group is embeddable into $[[T]]$. One can modify these arguments to show that any finite group also embeds into $[[T]]'$. 
\end{remark}

For each integer $0\leq i\leq h_n-1$, set $$U(i) = \bigsqcup_{v\in V_n} T^{h^{(n)}_v-i -1}B_v^{(n)}\mbox{ and }D(i) = \bigsqcup_{v\in V_n}T^{i}B_v^{(n)}.$$

The set $U(i)$ ($D(i)$) consists of atoms that are at the distance $i$ from the top (base) of the partition $\Xi_n$.
Notice that  the induced transformation $T_{U(i)}$ has the form

$$T_{U(i)}(x) = \left\{\begin{array}{lll}x & \mbox{if}& x\notin U( i); \\
T^{h_w^{(n)}}x & \mbox{if}& x\in T^{h_v^{(n)} - i-1}B_v^{(n)}\mbox{ and }T^{i+1}x\in B_w^{(n)}.\end{array}\right.$$
Similarly, the transformation $T_{D(i)}^{-1}$ has the form
$$T_{D(i)}^{-1}(x) = \left\{\begin{array}{lll}x & \mbox{if}& x\notin D( i); \\
T^{-h_w^{(n)}}x & \mbox{if}& x\in T^{i}B_v^{(n)}\mbox{ and }T^{-i-1}x\in T^{h_w^{(n)}-1}B_w^{(n)}.\end{array}\right.$$

\begin{remark}
(1) Informally, the action of the homeomorphism $T_{U(i)}$ can be expressed as follows.  The homeomorphism $T_{U(i)}$ moves any point $x\in T^{h_v^{(n)} - i -1}B^{(n)}_v$ to the top of the tower $\xi_v^{(n)}$; then to the base of some tower $\xi_w^{(n)}$; eventually this point is moved up to the level $T^{h^{(n)}_w -1 -i}B_w^{(n)}$. This can be seen from the decomposition
$$h_w^{(n)} =  i + 1 + (h_w^{(n)} - 1 - i).$$  Here the first summand shows the number of levels to the top of the tower $\xi_v^{(n)}$; the number $1$ tells us that the point is mapped onto the base of the partition $\Xi_n$; the last summand shows the number of steps needed to move the point to the level of tower $\xi_w^{(n)}$ that is at the same distance from the top as the level $T^iB_v^{(n)}$. Observe that a similar description can be also applied to $T_{D(i)}^{-1}$.

(2) Notice that $T_{U(i)}$ and $T_{D(i)}$ belong to $[[T]]$. Observe also that their orbit cocycles might not be compatible with the partition $\Xi_n$; though the cocycles will be compatible with some other partition $\Xi_m$, $m>n$.
\end{remark}

In the following definition, the sequence of integers $\{m_n\}_{n\geq 1}$ satisfies  Equation (\S) of Section \ref{SectionPreliminaries}.

\begin{definition}\label{DefinitionRotation} A homeomorphism $R\in [[T]]$ is called an {\it $n$-rotation with the rotation number less or equal to $r>0$} if there are two sets $S_d,S_u\subset \{0,\ldots, m_n\}$ such that
$$R  = \prod_{i\in S_u}(T_{ U(i)})^{l_i} \times \prod_{ j \in S_d} (T_{D( j)})^{k_j}  $$
for some   integers    $\{l_i\}$ and $\{k_j\}$ with $|l_i|\leq r$ and $|k_j| \leq r$. The sets $S_u$ and $S_d$ will be  called {\it supportive sets} for $R$.

Since $U(i)\cap D(j)  = \emptyset$ for all $0\leq i\neq j\leq m_n < h_n/2$, the rotation $R$ is well-defined and its definition does not depend on the order of the product.
\end{definition}

 The supportive set $S_d$ shows which levels within the distance $m_n$ to the base of the partition $\Xi_n$ are ``occupied'' by the support of $R$. The set $S_u$ shows the same, but for levels within the distance $m_n$ of the top of $\Xi_n$. 
 We would like to emphasize that the the numbers $\{l_i\}$ and $\{k_j\}$ in the definition of rotations can take on arbitrary (positive and negative) integer values with absolute values not exceeding $r$.  One can construct rotations with  rotation numbers being any given integer.
If $R_1$ and $R_2$ are rotations with rotation numbers not exceeding $r_1$ and $r_2$, respectively, then the rotation number of $R_1R_2$ does not exceed $r_1+r_2$.

\begin{example}\label{ExampleOdometer} We illustrate the definitions of rotations and permutations on the example of topological full group of the 2-odometer. Furthermore,  we will completely describe the structure of this group. The description was suggested by H.~Matui in private communications.

Set $Y = \{0,1\}^{\mathbb N}$. Recall that the $2$-odometer $O: Y\rightarrow Y$  is defined as $O(0^n1w) = 1^n0w$ for any sequence $1^n0w\in Y$ and $O(1^\infty) = 0^\infty$.  For every $n\geq 1$, set $B_0^{(n)} = \{x\in Y : x_0\ldots x_{n-1} = 0^n\}$ and $B_i^{(n)} = O^i B_0^{(n)}$, $0< i < 2^n-1$.  Then
$\Xi_n = \{B_0^{(n)},\ldots, B_{2^n-1}^{(n)}\}$ is a  sequence Kakutani-Rokhlin partitions $\{\Xi_n\}_{n\geq 1}$ satisfying conditions (1) - (5) from Section  \ref{SectionPreliminaries}.  Observe that $\bigcap_{n\geq 1}B(\Xi_n) = \{0^\infty\}$.

Fix $n\geq 1$. Consider a subset $G_n\subset [[O]]$ of elements $S$ such that the orbit cocycle $f_S$ is compatible with the partition $\Xi_n$. Since the odometer $O$ cyclically permutes the atoms $\{B_0^{(n)},\ldots, B_{2^n-1}^{(n)}\}$, the element $S$ will also act as a permutation of these atoms. Denote the induced permutation on $\{0,\ldots,2^n-1\}$ by $\mathcal P_n(S)$.
 We notice  that $G_n$ is, in fact, a subgroup of $[[O]]$ and   $\mathcal P_n : G_n \rightarrow \rm{Sym}(2^n)$ is a homomorphism.

If $S\in Ker(\mathcal P_n)$, i.e., $\mathcal P_n(S) = id$, then for every $i=0,\ldots,2^n-1$ there exists $k_i\in\mathbb Z$ such that \begin{equation}\label{EquationOdometer1}S|B_i^{(n)} \equiv O_i^{k_i}|B_i^{(n)},\end{equation} where $O_i$ is the induced transformation of $O$ onto the atom $B_i^{(n)}$. Since  the partition $\Xi_n$ consists only of one tower, we get that $D(i) = U(i)$ and $O_i = O_{D(i)} = O_{U(i)}$ for every $i=0,\ldots,2^n-1$.

It follows from Equation (\ref{EquationOdometer1}) that $Ker(\mathcal P_n)$ is isomorphic to $\mathbb Z^{2^n}$.
Observe that $Ker(\mathcal P_n)$  consists exactly of $n$-rotations in our terminology. The rotation number of $S\in Ker(\mathcal P_n)$ is $$\max_{0\leq i\leq 2^n-1} |k_i|.$$

Denote by $Perm_n$ the set of all $n$-permutations of $[[O]]$. Observe that $Perm_n$ is a subgroup of $G_n$.   As $Ker(\mathcal   P_n) \cap Perm_n = \{1\}$ and $\mathcal P_n(Perm_n) = Sym(2^n)$, the group $G_n$ is a semidirect product of $Perm_n$ and $Ker(\mathcal P_n)$, which, at the same time, can be represented as a permutational wreath product of $\mathbb Z$ and $Sym(2^n)$. Notice that any element $S\in G_n$ can be uniquely factored out as $S = PR$, where $P\in Perm_n$  is an $n$-permutation and $R\in Ker(\mathcal P_n)$ is an $n$-rotation.  We will show in Theorem  \ref{TheoremRepresentation} that  a similar factorization exists in the topological full group of any Cantor minimal system.

\end{example}

 We  fix the following {\it notation}. Let $P_v$ be a permutation of levels within the tower $\xi_v^{(n)}$. Note also that $P_v$  naturally induces a permutation of the set $\{0,\ldots, h_v^{(n)}-1\}$. For  permutations $P_v$ and $P_w$ of distinct towers,  the equality $k = P_v(i) = P_w(i)$ will literally mean that the images of the atoms $T^iB_v^{(n)}$ and $T^iB_w^{(n)}$ are atoms $T^kB_v^{(n)}$ and $T^kB_w^{(n)}$.

 \begin{enumerate}

\item We will treat the set $\{0,\ldots, h_v^{(n)}-1\}$ as a cyclic group (mod) $h_v^{(n)}$ and we will denote by $[-a,b]$  the set $$[-a,b] = \{0,\ldots, b\}\cup \{ h_v^{(n)} - 1, h_v^{(n)} - 2,\ldots, h_v^{(n)} - a \}.$$ for $a,b\geq 0$.

\item We can naturally identify the group $\{0,\ldots, h_v^{(n)}-1\}$ with  $\{|z| =  1 : z^{h_v^{(n)}} = 1\}$. Denote by $d$ the metric on the unit circle normalized in such a way that  $h_v^{(n)}$ is the total length of the circle. Denote by $d_v^{(n)}$ the metric induced on the set  $\{0,\ldots, h_v^{(n)}-1\}$. Observe that $d_v^{(n)} (0,h_v^{(n)}-1) = 1$.

\end{enumerate}

The following proposition establishes that every element of $[[T]]$  can be represented as a product of permutations and rotations.

\begin{theorem}\label{TheoremRepresentation} Let $Q\in [[T]]$.  (1) Then there exists $n_0>0$ such that for all $n\geq n_0$, the homeomorphism $Q$ can be represented as $Q=PR$, where $P$ is an $n$-permutation and $R$ is an $n$-rotation with the rotation number not exceeding one.
Furthermore,  the permutation $P$ can be factorized (in a unique way) as a product of permutations $P_{1},\ldots P_{v_n}$  meeting the following conditions:

\begin{enumerate}

\item[(i)] The permutation $P_v$ acts only within the $T$-tower $\xi_v^{(n)}$, $v\in V_n$;

\item[(ii)]
  $P_v(i) = P_w(i)$ for all $i\in [-m_n,m_m]$ and  for all $v,w\in V_n$;

\item[(iii)] the map $P_v$ induces a permutation of  $\{0,\ldots h_v^{(n)} - 1\}$ (denoted by the same symbol) with  the property that $d_v^{(n)}(P_v(i),i)  \leq n_0$ for all $i\in \{0,\ldots h_v^{(n)} - 1\}$;

\item[(iv)] the rotation $R$ acts only on levels which are within the distance $n_0$  to the top or the bottom of the partition. In other words, the supportive sets of the rotation $R$ are contained in $\{0,1,\ldots,n_0-1\}$.
\end{enumerate}

(2) If  $Q= P_2 R_2$ is another factorization with $P_2$ being an $n$-permutation and $R_2$ being an $n$-rotation, then $P = P_2$ and $R = R_2$.

(3) For any finite set $F\subset [[T]]$, there exists $n_0>0$ such that for all $n\geq n_0$, the factorizations $Z = P_Z R_Z$ of elements $Z\in F$ into  $n$-permutations and  $n$-rotations satisfy $P_{Z_1}\neq P_{Z_2}$ for $Z_1,Z_2\in F$ and $Z_1\neq Z_2$.
 \end{theorem}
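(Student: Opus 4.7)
The plan is to set $n_0:=\max_{x\in X}|f_Q(x)|$, finite since $f_Q$ is locally constant on the compact set $X$, and then choose $n$ large enough that: (a) both cocycles $f_Q$ and $f_{Q^{-1}}$ are constant on every atom of $\Xi_n$, so that $Q$ permutes the atoms of $\Xi_n$; (b) $m_n\geq n_0$; and (c) by uniform continuity of $f_Q$ combined with property (5) of Section \ref{SectionPreliminaries}, the value $f_Q(T^iB_v^{(n)})$ depends only on the distance of $i$ to the base or the top of the tower, independently of $v\in V_n$, provided that distance is at most $m_n$. Let $\beta:V_n\to V_n$ denote the base-transition permutation given by $\beta(v)=w$ iff $T^{h_v^{(n)}}B_v^{(n)}\subset B_w^{(n)}$; a direct calculation from the definitions shows that $T_{D(i)}$ and $T_{U(j)}$ act on $D(i)$ and $U(j)$ as the permutation $\beta$ on tower labels.

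Next I would construct $R$ and set $P:=QR^{-1}$. An atom $T^iB_v^{(n)}$ with $n_0\leq i\leq h_v^{(n)}-1-n_0$ is kept in tower $v$ by $Q$, since $|f_Q|\leq n_0$. For a bottom atom $T^iB_v^{(n)}$ with $i<n_0$ and $i+f_Q(T^iB_v^{(n)})<0$, the image overflows into the top of tower $\beta^{-1}(v)$; crucially, because $|i+f_Q|\leq n_0<h_n\leq h_{\beta^{-1}(v)}^{(n)}$, the tower index changes by exactly one $\beta$-step back. I therefore define $R:=\prod_{i\in S_d}T_{D(i)}^{-1}\cdot\prod_{j\in S_u}T_{U(j)}$, where $S_d\subset\{0,\dots,n_0-1\}$ and $S_u\subset\{0,\dots,n_0-1\}$ record the bottom and top levels at which any atom overflows under $Q$; since by (c) the overflow condition is the same across all towers at a given level, $R$ is well defined, has rotation number at most one, and has supportive sets inside $\{0,\dots,n_0-1\}$. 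A direct check shows $P:=QR^{-1}$ stays within each tower. Property (i) is then automatic; (ii) follows from (c) upon interpreting levels in the cyclic sense of the paper; (iii) follows from the bound $|f_Q|\leq n_0$ on within-tower shifts; and (iv) is built into the construction of $R$.

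For the uniqueness claim (2), suppose $Q=P_2R_2$ is another such factorization, so $P^{-1}P_2=RR_2^{-1}$. The left-hand side is a composition of $n$-permutations and hence an $n$-permutation (atoms in a tower go to atoms in the same tower), while since $U(i)\cap D(j)=\emptyset$ the transformations $T_{U(i)}$ and $T_{D(j)}$ commute on disjoint supports, so the right-hand side is again an $n$-rotation. On atoms in the middle of towers the rotation side acts as the identity, so $P^{-1}P_2$ is the identity there; on top/bottom atoms the within-tower constraint $0\leq f_{P_2}+i\leq h_v^{(n)}-1$ uniquely determines the required rotation exponent at each level, forcing $R=R_2$ and hence $P=P_2$.

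For part (3), I would take $n_0$ to be the maximum of the individual thresholds over the (finitely many) elements of $F$. For each pair $Z_1\neq Z_2$ in $F$ the clopen set $\{x:Z_1(x)\neq Z_2(x)\}$ is nonempty and hence uncountable; since the orbit $\{T^kx_0:k\in\mathbb Z\}$ is countable, I may choose $x\notin\{T^kx_0\}$ with $Z_1(x)\neq Z_2(x)$. By property (5), the top and bottom $n_0$ levels of $\Xi_n$ are contained in a $(1/n)$-neighbourhood of the finite set $\{T^ix_0:-n_0\leq i\leq n_0-1\}$, so for $n$ large enough $x$ lies in the middle of some tower, where $P_{Z_j}(x)=Z_j(x)$ for $j=1,2$ and hence $P_{Z_1}\neq P_{Z_2}$. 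The main obstacle in this plan is the bound of one on the rotation number: it rests on the inequality $n_0<h_n$ from property (4) of the Kakutani--Rokhlin partitions, which forces each overflow to correspond to exactly one step of $\beta$; without this, one would be driven into higher powers of $T_{D(i)}$ or $T_{U(j)}$ and the uniform factorization would collapse.
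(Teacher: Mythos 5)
Your proposal is essentially the paper's own proof: the same choice of $n_0\geq\max_X|f_Q|$ combined with condition (5) and a Lebesgue-number/uniform-continuity argument to make $f_Q$ constant on the levels $T^iB(\Xi_n)$, $-m_n-1\leq i\leq m_n$; the same rotation $R$ assembled from $T_{U(i)}$ and $T_{D(j)}^{-1}$ on the overflow levels (rotation number one, supportive sets in $\{0,\dots,n_0-1\}$); the permutation taken as $P=QR^{-1}$ (the paper instead writes $P_v$ explicitly via $p=-i+\bigl((i+f_Q)\bmod h_v^{(n)}\bigr)$ and verifies $Q=PR$, which is the same computation run in the other direction); the same uniqueness argument (an $n$-permutation can equal an $n$-rotation only if both are trivial); and for part (3) the same idea of evaluating away from the supports of the rotations, which lie in the top/bottom levels shrinking to the orbit of $x_0$. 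One caveat: your ``base-transition permutation'' $\beta$ is not well defined in general, since $T^{h_v^{(n)}}B_v^{(n)}$ need not be contained in a single base atom $B_w^{(n)}$ (cf.\ Remark \ref{RemarkKakutaniRokhlin}(1); in the displayed formula for $T_{U(i)}$ the index $w$ depends on the point, not just on $v$). This does not break your argument, because the induced maps $T_{U(i)}$, $T_{D(j)}^{-1}$ already handle the tower transition pointwise and all you actually use is that the overflow condition is constant on each level near the top and bottom, which your condition (c) supplies; you should simply delete the claim about $\beta$ or restate it pointwise.
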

\begin{proof} (1) We will split the proof into several steps.

(1-a) Choose an integer $n_0$ such that the orbit cocycle $f_Q$  is compatible (constant on atoms) with the partition $\Xi_{n_0}$ and $n_0\geq \max_{x\in X}|f_Q(x)|$.  Furthermore, in view of the compactness of $X$, we can assume that $n_0$ is chosen  so large that $1/n_0$ is a Lebesgue number for the clopen partition $\{f_Q^{-1}(k) : k\in\mathbb Z\}$, i.e., any set having diameter less than $1/n_0$ is contained in the set $f_Q^{-1}(k)$ for some $k\in\mathbb Z$.
Notice that the choice of $n_0$ implies that the cocycle $f_Q$ is constant on  each set $T^iB(\Xi_{n_0})$ for $-m_n-1 \leq i \leq m_n$, see Equation (\ddag) in Section \ref{SectionPreliminaries}.

Thus,  for all $n\geq n_0$ if for some $x\in T^iB^{(n)}_v$ we have that
$f_Q(x) + i \geq h_v^{(n)}$, then $f_Q|U(h_v^{(n)} - 1- i ) \equiv const$. Since $n_0\geq \max_{z\in X}|f_Q(z)|$, we have that $h_v^{(n)} - i \leq f_Q(x) \leq  n_0$. Similarly, if $x\in T^iB^{(n)}_v$ is such that
$f_Q(x) + i < 0 $, then $f_Q|D(i) \equiv const$ and $i\leq n_0$. These properties say that if an atom $T^iB_v^{(n)}$  is moved  over the top or the bottom of $\Xi_n$ by the homeomorphism $Q$, then this atom is   taken from  the $n_0$ lowest or $n_0$ highest levels of $\Xi_n$, i.e. $d_v^{(n)}(i,0) \leq n_0$ or $d_v^{(n)}(i,h_v^{(n)}-1) \leq n_0$.

(1-b) Define a rotation $R\in [[T]]$ as follows:

$$R(x) = \left\{\begin{array}{lll}T_{U(h_v^{(n)} - 1- i)}(x) & \mbox{if} & f_Q(x) + i \geq h_v^{(n)}\\
T^{-1}_{D(i)}(x) & \mbox{if} & f_Q(x) + i < 0 \\
x &\mbox{otherwise.}\end{array}\right.$$

Observe that the rotation number of $R$ is equal to $1$ and  $R$ may act non-trivially only on the $n_0$ lowest  levels and the $n_0$ highest  levels of $\Xi_n$.

 Define a permutation $P$ as a product of $v_n$ permutations $P_1\cdots P_{v_n}$, where each permutation $P_v$ acts only within the tower $\xi^{(n)}_v$ as follows
 $$P_v|_{T^iB_v^{(n)} }= T^p|_{T^iB_v^{(n)}},\mbox{ where }p = -i + \left(i + f_Q|_{T^iB_{v}^{(n)}} \mod h_v^{(n)}\right), $$
for $i=0,\ldots, h_v^{(n)}-1$. In other words, we define the permutation $P_v$ as $Q$ if the homeomorphism $Q$ does not move  the set $T^iB_{v}^{(n)}$ over the top or the base of the tower, i.e. $0\leq f_Q(x) + i \leq h_v^{(n)}-1$ for $x\in T^iB_v^{(n)}$. If $ f_Q(x) + i \geq h_v^{(n)}$ for $x\in T^iB_v^{(n)}$, then the entire level $U(h_v^{(n)} - 1- i)$ will go over the top of $\Xi_n$ under the action of $Q$. This implies that  $QU(h_v^{(n)} - 1- i) = D(j)$ for some (unique) $j$. In this case, we require that $P_v $ sends $T^iB^{(n)}_v$ to the atom $T^jB^{(n)}_v$ under the action of $T^{j-i}$.  Now if $f_Q(x) + i <0 $ for $x\in T^iB_v^{(n)}$, then $QD(i) = U(j)$ for some $j$. We set $P = T^{h_v^{(n)}-1-j - i}$ on such a set.

(1-c) {\it We claim that $Q = PR$.} Fix $x\in X$. Find unique $v\in V_n$ and $0\leq i \leq h_v^{(n)}-1$ with $x\in T^i B_v^{(n)}$. We will consider three cases depending on the value of $ f_Q(x) + i $.

   Assume that $0\leq f_Q(x) + i \leq h_v^{(n)}-1$.
It follows from the definitions of $P$ and $R$ that $R(x) = x$ and $P(x) =  Q(x)$. Thus, $Q(x) = PR(x)$.

 Assume that $i + f_Q(x) \geq h_v^{(n)}$. Then $y := R(x) = T^{h_w^{(n)}}(x)$, where $w$ is uniquely defined by the relation $T^{h_v^{(n)} - i}x\in B^{(n)}_w$.
Note that  $h_v^{(n)} - i \leq n_0 $ and $y,x\in U(h_v^{(n)} - 1- i)$. Hence $ f_Q(y) = f_Q(x)$ by the choice of $n_0$.  Note also that $y\in T^{j}B_w^{(n)}$ with $j = h_w^{(n)}  - (h_v^{(n)} - i)$.
Hence, by the definition of $P$, we get that $P(y) = T^p(y)$ with
\begin{eqnarray*} p & =  & - j + \left(f_Q(y) + j \mod h_w^{(n)}\right) \\
& =  & - j + \left(f_Q(x) +  \left(h_w^{(n)}  - (h_v^{(n)} - i)\right) \mod h_w^{(n)}\right) \\
\\ &= &- j + \left(f_Q(x) - (h_v^{(n)} - i)\right ) \\
& = & f_Q(x) - (h_v^{(n)}-i+j) \\
& = & f_Q(x) - h_w^{(n)}.\end{eqnarray*}
It follows that $PR(x) = T^{f_Q(x) - h_w^{(n)}} T^{h^{(n)}_w}x = T^{f_Q(x)}x = Qx$.

 The case  $i + f_Q(x) <0$ can be checked similarly to the one above. We leave the details to the reader.

 (2) Assume that $PR = P_2R_2$. Hence $P_2^{-1}P = R_2 R^{-1}$. Observe that $P_2^{-1}P$ is an $n$-permutation and $R_2 R^{-1}$ is an $n$-rotation.  Notice also that if $P$ is an $n$-permutation and $R$ is an $n$-rotation, then $P\neq R$ unless they are both trivial transformations.
This immediately yields the uniqueness of the decomposition.

(3) Consider two distinct elements $Z_1$ and $Z_2$ of  $[[T]]$. Find a clopen set $C$   such that $Z_1(y)\neq Z_2(y)$ for every $y\in C$. Consider factorizations $Z_1 = P^{(1)}_nR^{(1)}_n$ and $Z_2 = P^{(2)}_nR^{(2)}_n$ into $n$-permutations and $n$-rotations. We notice that for all $n$ large enough the $n$-rotations $R^{(1)}_n$ and $R^{(2)}_n$ are supported by the sets $[-k,k]$, for some  $k>0$ independent of $n$. Hence, we can find a clopen subset $C'\subset C$ such that  for large $n$ the supports of $R^{(1)}_n$ and $R^{(2)}_n$ are disjoint from $C'$. This implies that $P^{(1)}_n|C' \neq P^{(2)}_n|C'$ for all $n$-large enough. This completes the proof.
 \end{proof}

%\medskip We observe that the factorization of group elements established in Proposition \ref{PropositionRepresentation} heavily depends on the parameter $n$. By changing $n$ we can get distinct factorizations. However, in view of the next proposition, when $n$ is kept fixed, the factorization turns out to be unique. We would like to emphasize that the next result is valid for rotations with arbitrary rotation numbers.

%%%%%%%%%%%%%%%%%%%%%%%
%%%%%%%%%%%%%%%%%%%%%%%
%%%%%%%%%%%%%%%%%%%%%%%%

\section{Applications}\label{SectionLEF}

In this section, we show that the topological full group of {\it any} Cantor minimal system is locally embeddable into finite groups and that its commutator is a product of two locally-finite subgroups.

\begin{theorem}\label{TheoremLEF}  Let $(X,T)$ be an arbitrary  Cantor minimal system. Then the topological full group $[[T]]$ is an LEF group.
\end{theorem}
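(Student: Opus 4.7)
My strategy is to use Theorem \ref{TheoremRepresentation} to write each element of a given finite subset of $[[T]]$ as a product of an $n$-permutation (living in a naturally finite group) and an $n$-rotation (living in a finitely generated free abelian group of rank $2(m_n+1)$), and then to replace the rotation factor by its truncation modulo a large integer to obtain an approximating finite group. Concretely, let $F \subset [[T]]$ be finite, and put $F^{*} = F \cup F \cdot F$. I would first apply Theorem \ref{TheoremRepresentation} parts (1) and (3) to $F^{*}$, choosing $n$ large enough that (a) every $Z \in F^{*}$ has a unique factorization $Z = P_Z R_Z$ with $P_Z$ an $n$-permutation obeying conditions (ii)--(iii) and $R_Z$ an $n$-rotation of rotation number at most one, (b) distinct elements of $F^{*}$ have distinct $P_Z$, and (c) all cocycles $f_Z$ for $Z \in F^{*}$ are $\Xi_n$-measurable with values in $[-m_n, m_n]$.

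Next I would assemble the target finite group $H_n$. The permutations $P_Z$ live in the finite subgroup $\Pi_n$ of $\prod_{v \in V_n} \mathrm{Sym}(h_v^{(n)})$ cut out by the coupling condition (ii). The rotations $R_Z$ are built from the $2(m_n+1)$ commuting infinite-order elements $T_{U(i)}, T_{D(j)}$, $0 \leq i, j \leq m_n$, so the rotations of bounded rotation number lie in a free abelian group $\mathcal R_n \cong \mathbb Z^{2(m_n+1)}$. Fix $M$ sufficiently large relative to the exponents arising from elements of $F^{*}$, and take $H_n := \Pi_n \ltimes (\mathcal R_n / M \mathcal R_n)$ with the $\Pi_n$-action inherited from conjugation of $\Pi_n$ on $\mathcal R_n$ inside $[[T]]$. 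Define $\varphi_n: [[T]] \to H_n$ on $F^{*}$ by $\varphi_n(Z) := (P_Z, [R_Z]_M)$, extending arbitrarily elsewhere.

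To finish, injectivity of $\varphi_n|_F$ will follow from Theorem \ref{TheoremRepresentation}(3) combined with uniqueness of the factorization, and multiplicativity $\varphi_n(Z_1 Z_2) = \varphi_n(Z_1)\varphi_n(Z_2)$ on $F$ will reduce to rewriting $P_{Z_1} R_{Z_1} P_{Z_2} R_{Z_2}$ in canonical $PR$-form and checking that the truncation modulo $M$ causes no collapse on the finitely many rotation exponents in play; the bound on rotation numbers from Theorem \ref{TheoremRepresentation}(1) is what makes this possible.

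The hard part will be verifying that the semidirect structure $\Pi_n \ltimes \mathcal R_n$ really controls the group law on the portion of $[[T]]$ containing $F^{*}$: concretely, that conjugation by any $P_Z$ preserves $\mathcal R_n$ and factors through the truncation. In the single-tower odometer setting of Example \ref{ExampleOdometer} this structure is the permutational wreath product $\mathbb Z$ Wr $\mathrm{Sym}(2^n)$ and the verification is immediate; in the multi-tower case one must track how $n$-permutations act on the supportive sets of Definition \ref{DefinitionRotation}, and how this interacts with the top-to-bottom wiring of $\Xi_n$. Once this structural compatibility is in place, the LEF property follows by choosing $n$ and $M$ large enough for any given finite set $F$.
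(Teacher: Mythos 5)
Your target group $H_n=\Pi_n\ltimes(\mathcal R_n/M\mathcal R_n)$ does not exist as described, and this is a genuine gap rather than a routine verification. First, the set $\Pi_n$ of $n$-permutations ``cut out by condition (ii)'' of Theorem \ref{TheoremRepresentation} is not closed under composition: condition (ii) couples the towers only on the window $[-m_n,m_n]$, and a composition can first send a level of that window outside it, after which the coupling says nothing. Second, and more seriously, conjugation by $n$-permutations does not preserve the group $\mathcal R_n$ generated by $T_{U(i)},T_{D(j)}$, $0\le i,j\le m_n$: an $n$-permutation may send the level sets $U(i)$, $D(j)$ to levels deep inside the towers (or act non-uniformly across towers on the relevant preimage levels, since the towers have different heights), and then the conjugate is an induced transformation on interior levels, which is not an $n$-rotation at all -- let alone an element of your rank-$2(m_n+1)$ lattice. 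The clean wreath-product picture you invoke is exact only in the single-tower situation of Example \ref{ExampleOdometer}; in the multi-tower case Theorem \ref{TheoremRepresentation} gives an element-by-element factorization for $n$ large (depending on the element), not a semidirect product at a fixed $n$, so there is no natural action of your $\Pi_n$ on $\mathcal R_n/M\mathcal R_n$ with which to define the group law of $H_n$. Precisely the point you defer as ``the hard part'' is where the construction breaks.

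The repair is to discard the rotation coordinate altogether, which is what the paper does: take $H$ to be the finite group of \emph{all} $n$-permutations and set $\varphi(Q)=S_Q$ for $Q\in F^2$ (identity elsewhere). Injectivity on $F$ is exactly Theorem \ref{TheoremRepresentation}(3) -- your extra factor $[R_Z]_M$ buys nothing here. Multiplicativity follows from uniqueness of the factorization once one checks that $S_Z^{-1}R_QS_Z$ is again an $n$-rotation, and crucially this is needed only for the finitely many $Q,Z\in F$, where it is available: by (iv) the rotations $R_Q$ are supported within a fixed distance $d$ of the top and bottom, by (iii) the $S_Z$ displace levels by a bounded amount, and by (ii) they act identically on all towers on the window $[-m_n,m_n]$, so for $n$ large the conjugate is a product of the generators $T_{U(i)},T_{D(j)}$ with small indices, hence an $n$-rotation; then $QZ=(S_QS_Z)\bigl(S_Z^{-1}R_QS_ZR_Z\bigr)$ forces $\varphi(QZ)=S_QS_Z=\varphi(Q)\varphi(Z)$. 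In short, the conjugation compatibility you need holds for finitely many specific elements but not for your whole finite group, and the proof should be restructured so that only the former is used.
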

\begin{proof} Fix a finite set $F\subset [[T]]$. We can  assume that the group identity is contained in $F$.  Set $F^2 = F\cdot F$. Using Proposition 3.9, find $n\in \mathbb N$ such that  every $Q\in F^2$ can be factored as  $Q  = S_Q R_Q$, where $R_Q$ is an $n$-rotation and $S_Q$ is an $n$-permutation  such that $S_Q \neq S_Z$ for $Q,Z\in F^2$ with $Q\neq Z$.

Since $F$ is a finite set, there is a integer $d$ such that all $n$-rotations $\{R_Q\, :\, Q\in F\}$ are supported by levels $[-d,d]$ for all $n$ large enough (Theorem \ref{TheoremRepresentation}(iv)).  Using Theorem \ref{TheoremRepresentation}(ii), we can choose $n$  so that for every $Q\in F$, we have $S_{Q,v}^{- 1}(i) = S_{Q,w}^{- 1}(i)$ for all $i\in [-d,d]$ and $v,w\in V_n$, where $S_{Q} = \prod_{v\in V_n}S_{Q,v}$. This implies that
 $S_Z^{-1}R_QS_Z$ is  an $n$-rotation for all $Z,Q\in F$.

Denote by $H$ the group of all $n$-permutations. Then $H$ is a finite group. Define a map $\varphi : F^2 \rightarrow H$ by $\varphi(Q) = \varphi(S_QR_Q) = S_Q.$ Observe that in view of Theorem \ref{TheoremRepresentation}, the factorization $Q = S_Q R_Q$ is unique for every $Q\in F^2$. Thus, the map $\varphi$  is well-defined.  Note that $\varphi$ is injective on $F$.

If $Q,Z\in F$, then
$$\varphi(QZ) = \varphi(S_QR_Q S_Z R_Z) = \varphi((S_Q S_Z) S_Z^{-1}R_Q S_Z R_Z) = S_QS_Z,$$ where the last equality follows from the uniqueness of the factorization and the fact that $S_Z^{-1}R_Q S_Z R_Z$ is an $n$-rotation and $S_Q S_Z$ is an $n$-permutation. Therefore, $\varphi(QZ) = S_Q S_Z = \varphi(Q) \varphi(Z)$. Set $\varphi(g) = id$ for all $g\notin F^2$. This shows that $[[T]]$ is an LEF group. \end{proof}

Recall that the group $[[T]]_x$ is defined as  the set of all elements $S\in [[T]]$ with $S(\{T^n(x) : n\geq 1\}) = (\{T^n(x) : n\geq 1\})$. It was observed in \cite{GiordanoPutnamSkau:1999} that the group $[[T]]_x$ is locally finite. On the language of the present paper, this can be also derived from the fact that the group $[[T]]_x$ coincides with the set of all $n$-permutations associated to a sequence $\{\mathcal P_n\}_{n\geq 1}$  of Kakutani-Rokhlin partitions converging to $\{x\}$, i.e.  $\bigcap_{n\geq 1}B(\mathcal P_n) = \{x\}$ (see Section \ref{SectionPreliminaries} for details). Indeed, if $P$ is an $n$-permutation consistent with $\mathcal P_n$,  then  $P\in [[T]]_x$  as $P$ does not move points over the top or bottom of the partition. Conversely, if $Q \in [[T]]_x$, then $Q = PR$ is a product of $n$-permutation and $n$-rotation associated to the partition $\mathcal P_n$ (Theorem \ref{TheoremRepresentation}). Notice that if $R\neq id$, then $Q$ maps points from the backward $T$-orbit of $x$ onto the forward $T$-orbit. So $Q$ must be an $n$-permutation. The following result is an immediate corollary of Theorem \ref{TheoremRepresentation}.

\begin{proposition} For any point $x\in X$, the group $[[T]]_x$ is a maximal locally-finite subgroup of $[[T]]$.
\end{proposition}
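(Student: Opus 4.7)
The plan is to separate the statement into its two parts---local finiteness of $[[T]]_x$ (already recalled in the paragraph preceding the proposition) and maximality among locally-finite subgroups of $[[T]]$---and then to deduce maximality from the factorization supplied by Theorem \ref{TheoremRepresentation}.

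First I would fix a refining sequence $\{\mathcal{P}_n\}_{n\ge1}$ of Kakutani-Rokhlin partitions with $\bigcap_n B(\mathcal{P}_n)=\{x\}$, which exists by the construction of Section \ref{SectionPreliminaries}. Writing $H_n$ for the finite group of $n$-permutations with respect to $\mathcal{P}_n$, the paragraph preceding the proposition identifies $[[T]]_x$ with the ascending union $\bigcup_n H_n$, so $[[T]]_x$ is automatically locally finite. For maximality, suppose $G \subseteq [[T]]$ is a locally-finite subgroup strictly containing $[[T]]_x$ and pick $Q \in G \setminus [[T]]_x$; the goal is to produce an element of infinite order in $G$, contradicting its local finiteness.

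Apply Theorem \ref{TheoremRepresentation}: for all sufficiently large $n$ the element $Q$ factors uniquely as $Q = P_n R_n$, with $P_n$ an $n$-permutation and $R_n$ an $n$-rotation of rotation number at most one. Since $n$-permutations belong to $[[T]]_x \subseteq G$, we have $P_n \in G$, and therefore $R_n = P_n^{-1} Q \in G$. Moreover $R_n \neq \mathrm{id}$, since otherwise $Q = P_n$ would itself be an $n$-permutation and hence lie in $[[T]]_x$, contradicting the choice of $Q$.

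The main step is then to check that $R_n$ has infinite order. By Definition \ref{DefinitionRotation} together with part (iv) of Theorem \ref{TheoremRepresentation}, $R_n$ is a product of nonzero powers of first-return transformations $T_{U(i)}^{\pm 1}$ and $T_{D(j)}^{\pm 1}$ whose supports $U(i), D(j)$ are pairwise disjoint (the constraint $m_n < h_n/2$ keeps the top-level supports separated from the bottom-level ones). These factors therefore commute, and by Remark \ref{RemarkInfiniteAbelianGroup} each of them is minimal---hence aperiodic and of infinite order---on its support, so $R_n$ itself has infinite order. Consequently $\langle R_n \rangle \subseteq G$ is a cyclic infinite subgroup, giving the desired contradiction. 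The only spot where I expect to expand slightly beyond citing the excerpt is the inclusion $H_n \subseteq [[T]]_x$ used twice above; this rests on the observation recalled earlier that an $n$-permutation stays inside each tower, and so, because $x$ sits at the base of some tower and the forward half-orbit $\{T^k x : k \ge 0\}$ meets each tower base only at non-negative-time points, such a shuffling cannot push a forward-orbit point of $x$ onto the backward half-orbit.
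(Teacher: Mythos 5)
Your proposal is correct and follows essentially the same route as the paper: factor $Q=PR$ via Theorem \ref{TheoremRepresentation} with respect to partitions whose bases shrink to $x$, note $P\in[[T]]_x$ and $R\neq id$, and conclude that any subgroup containing $[[T]]_x$ and $Q$ contains the infinite-order element $R$, contradicting local finiteness. You merely make explicit two points the paper leaves implicit (why $n$-permutations lie in $[[T]]_x$ and why a nontrivial rotation has infinite order, via commuting induced transformations on disjoint supports), and both justifications are sound.
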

\begin{proof} Consider any element $Q\in [[T]]\setminus [[T]]_x$. Using Theorem \ref{TheoremRepresentation}, we can write down $Q = PR$, where $P\in [[T]]_x$ and $R$ is a rotation. Since $Q\notin [[T]]_x$, we get that $R\neq id$. Notice that the group generated by $[[T]]_x$ and the element $Q$ contains $R$, which is an element of infinite order. This establishes the result.
\end{proof}

 Recall that  $\Gamma_x = [[T]]_x\cap [[T]]'$, where $x\in X$.  Each group $\Gamma_x$ is locally finite. The proof of Theorem \ref{TheoremProductLocallyFiniteGroups} is a modification of that of Lemma 4.1 from \cite{Matui:2006}.  We notice that Lemma 4.1 in \cite{Matui:2006} is heavily based on \cite[Remark 5.6]{GiordanoPutnamSkau:1999}, whose proof, in its turn, uses  operator-algebra techniques.  In the following lemma we give a purely combinatorial proof of \cite[Remark 5.6]{GiordanoPutnamSkau:1999}.  For a point $x\in X$, set $O^+(x) = \{T^n(x) : n\geq 0\}$ and $O^-(x) = \{T^n(x) : n < 0\}$. These are the {\it forward} and {\it backward} $T$-orbits of $x$, respectively. For each $Q\in [[T]]$, set $$a(Q) = card\left(Q(O^-(x))\cap O^{+}(x)\right)\mbox{ and }b(Q) = card\left(Q^{-1}(O^+(x))\cap O^{-}(x)\right).$$  Fix a $T$-invariant measure $\mu$ and set $\varphi(Q) = \int_X f_Q d\mu$. Notice that $\varphi :[[T]] \rightarrow \mathbb R$ is a non-trivial group homomorphism, see \cite{GiordanoPutnamSkau:1999}.

 \begin{lemma} For any $Q\in [[T]]$, we have that $\varphi(Q) = a(Q) - b(Q)$. In particular, $\varphi([[T]]) = \mathbb Z$ and the definition of $\varphi$ is independent of the choice of  $\mu$.
 \end{lemma}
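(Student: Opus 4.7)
The plan is to establish the identity $\varphi(Q) = \Phi(Q)$, where $\Phi(Q) := a(Q) - b(Q)$, by showing that both expressions define homomorphisms $[[T]] \to \mathbb{R}$ and then verifying that they agree on a generating family. First I would observe that since $f_Q$ is continuous on the compact space $X$, the integer $M := \max_X |f_Q|$ is finite; if $y = T^n(x) \in O^-(x)$ satisfies $Q(y) \in O^+(x)$, then $n + f_Q(y) \geq 0$ forces $-M \leq n \leq -1$. Hence $0 \leq a(Q), b(Q) \leq M$ and $\Phi(Q)$ is a well-defined integer.

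Writing $\chi = \mathbf{1}_{O^+(x)}$ on the orbit $O(x)$, one has
$$
\Phi(Q) = \sum_{y \in O(x)} \bigl( \chi(Q(y)) - \chi(y) \bigr),
$$
with only finitely many nonzero terms by the bound above. For $Q_1, Q_2 \in [[T]]$, the decomposition $\chi(Q_1 Q_2 y) - \chi(y) = (\chi(Q_1 Q_2 y) - \chi(Q_2 y)) + (\chi(Q_2 y) - \chi(y))$, combined with the reindexing $z = Q_2(y)$ (valid because $Q_2$ permutes $O(x)$), yields $\Phi(Q_1 Q_2) = \Phi(Q_1) + \Phi(Q_2)$. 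Thus $\Phi$ is a homomorphism $[[T]] \to \mathbb{Z}$.

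Next I would verify that $\varphi$ and $\Phi$ coincide on two families that together generate $[[T]]$ by Theorem \ref{TheoremRepresentation}: $n$-permutations and induced transformations on clopen sets. An $n$-permutation $P$ has finite order since it permutes the finite collection of atoms of $\Xi_n$, so $\varphi(P) = 0 = \Phi(P)$ because each map is a homomorphism into a torsion-free abelian group. For an induced transformation $T_A$ with $A$ a nonempty clopen set, the Kakutani-Rokhlin partition $X = \bigsqcup_k \bigsqcup_{i=0}^{k-1} T^i A_k$ gives the Kac-type identity $\int_A t_A\, d\mu = \sum_k k\, \mu(A_k) = \mu(X) = 1$, so $\varphi(T_A) = 1$; meanwhile, $T_A$ advances each visitor of $A$ to its next $A$-visit (and fixes $X \setminus A$), so there is exactly one crossing from $O^-(x)$ to $O^+(x)$, namely from the last visit of the orbit to $A$ strictly before $x$ onto the first visit at or after $x$, and no crossings in the opposite direction, giving $\Phi(T_A) = 1$. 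Invoking the factorization $Q = PR$ from Theorem \ref{TheoremRepresentation}, in which $R$ is a product of powers of such $T_{U(i)}, T_{D(j)}$, the homomorphism property concludes $\varphi(Q) = \Phi(Q)$ for every $Q \in [[T]]$.

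The main obstacle is bridging the analytic quantity $\varphi$ (an integral depending on $\mu$) with the combinatorial quantity $\Phi$ (a finite count of orbit crossings at $x$). A direct atom-by-atom Kakutani-Rokhlin comparison seems awkward, because the tower sums $\sum_i a_{v,i}$ must combine with the measures $\mu(B_v)$ in a very specific way to produce an integer. Reducing the identity to agreement on generators sidesteps this weighting altogether; once the equality $\varphi = \Phi$ is established, the integrality of $\varphi$, its independence from the choice of $\mu$, and surjectivity onto $\mathbb{Z}$ (witnessed by $\varphi(T) = 1$) follow at once.
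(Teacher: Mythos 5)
Your proof is correct, and it establishes the statement in the form the paper clearly intends: as printed, $b(Q)=card\left(Q^{-1}(O^+(x))\cap O^-(x)\right)$ would equal $a(Q)$ by injectivity of $Q$, and both you and the paper's own argument really count $b(Q)=card\left(Q(O^+(x))\cap O^-(x)\right)$, the forward-orbit points sent into the backward orbit. Your route shares the paper's skeleton --- factor $Q=PR$ via Theorem \ref{TheoremRepresentation} and evaluate on permutations and induced transformations --- but differs at two points. The paper never shows that $a-b$ is a homomorphism; it writes $R=\prod_{a\in A}T_{U(a)}\prod_{b\in B}T_{D(b)}^{-1}$, gets $\varphi(Q)=card(A)-card(B)$, and then identifies $card(A)$ with $a(Q)$ and $card(B)$ with $b(Q)$ by reading the levels that cross the top or bottom of $\Xi_n$ as orbit crossings at $x$, an identification that tacitly requires the Kakutani--Rokhlin partitions to be built over the point $x$ itself. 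Your observation that $\Phi(Q)=\sum_{y\in O(x)}(\chi(Qy)-\chi(y))$ is a homomorphism (the telescoping and reindexing are legitimate because all three sums have finite support, the cocycles being bounded, and because $Q_2$ permutes $O(x)$) removes that dependence: it suffices that $\varphi$ and $\Phi$ agree on each factor, so the partitions may sit over any point, and the level-versus-crossing bookkeeping for a general $Q$ is replaced by the one-line computations $\Phi(P)=0$ (an $n$-permutation has finite order and both targets are torsion-free) and $\Phi(T_A)=1$ (exactly one crossing forward and none backward; the existence of a last visit of $O^-(x)$ to $A$ before $x$ uses density of the backward half-orbit, i.e.\ minimality). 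Second, for $\varphi(T_A)=1$ the paper factors $T=T_AP$ with $P$ periodic, citing \cite{BezuglyiMedynets:2008}, whereas you use the Kac identity $\sum_k k\,\mu(A_k)=\mu(X)=1$; both work, and yours is self-contained. The auxiliary facts you invoke --- that $\varphi$ is a homomorphism and that $n$-permutations have finite order --- are indeed available, the former being stated just before the lemma and the latter because a permutation is determined by the induced permutations of the levels of each tower.
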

 \begin{proof}  Let $P$ be an $n$-permutation. Then $\varphi(P) = 0$. Notice that $\varphi(T) = 1$ and $\varphi(T^{-1}) = -1$. If $T_C$ is the induced transformation on a clopen set $C$, then $T = T_C P$ for some periodic transformation $P$, see, for example, \cite[Proposition 2.1]{BezuglyiMedynets:2008}. Hence $\varphi(T_C) = 1$ and $\varphi(T_C^{-1}) = -1$.

 Consider an arbitrary $Q\in [[T]]$.  Using Theorem \ref{TheoremRepresentation}, find   an $n$-permutation $P$ and  an $n$-rotation $R$ with  $Q = PR$ and $R$ having the rotation number at most one. Then $\varphi(Q) = \varphi(R)$. Writing down $R$ as a product of induced transformations,
$$ R  = \prod_{a\in A}T_{ U(a)} \times \prod_{ b \in B} T_{D( b)}^{-1} $$ for some subsets  $A,B\subset \{0,1\ldots,m_n\}$, we get that $\varphi(Q) = card(A) - card(B)$. Since the set $A$ (the set $B$) represents the levels that are mapped by $Q$ over the top (bottom) of a partition, we get that  $card(A) = a(Q)$ and $card(B) = b(Q)$.
 \end{proof}

\begin{theorem}\label{TheoremProductLocallyFiniteGroups} Let $(X,T)$ be a Cantor minimal system. Then  $[[T]]' = \Gamma_x\cdot \Gamma_y$, where $x$ and $y$ are points from distinct $T$-orbits.
\end{theorem}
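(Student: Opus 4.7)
The plan is to adapt Matui's proof of Lemma 4.1 in \cite{Matui:2006}, which established $\mathrm{Ker}(\varphi) = [[T]]_x \cdot [[T]]_y$. The new input is the combinatorial formula $\varphi(Q) = a(Q) - b(Q)$ just proved, which replaces Matui's operator-algebraic appeal to \cite{GiordanoPutnamSkau:1999} and gives tighter combinatorial control over the factors.

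Fix $g \in [[T]]'$. Since $[[T]]' \subseteq \mathrm{Ker}(\varphi) = [[T]]_x \cdot [[T]]_y$, one can already write $g = a_0 b_0$ with $a_0 \in [[T]]_x$ and $b_0 \in [[T]]_y$, but a priori neither factor lies in $[[T]]'$. The goal is to refine this factorization so that both do.

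To refine it, I would redo Matui's construction but make commutator-valued choices throughout. Concretely, apply Theorem \ref{TheoremRepresentation} to $g$ at a Kakutani-Rokhlin partition $\Xi_n$ (Section \ref{SectionPreliminaries}) with base shrinking to $\{x\}$, writing $g = PR$ with $P$ an $n$-permutation and $R$ an $n$-rotation of rotation number at most $1$. Since $P$ does not cross any tower top or base and $x$ lies at the base of its tower, $P$ preserves $O^+(x)$ and so $P \in [[T]]_x$. The balance $\varphi(g) = 0$, combined with $\varphi(P) = 0$ (finite order), forces $\varphi(R) = 0$. Now invoke Remark \ref{RemarkInfiniteAbelianGroup}(2): whenever $B''$ is a $T$-translate of $B'$, the product $T_{B'}T_{B''}^{-1}$ is a commutator and hence lies in $[[T]]'$. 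I would pair each ``top-crossing'' induced transformation $T_{U(i)}$ appearing in $R$ with a ``base-crossing'' $T_{D(j)}$, using the refinement principle Remark \ref{RemarkKakutaniRokhlin}(3) to arrange the two supports as $T$-translates within a single column of $\Xi_n$, and so rewrite $R$ as a product of commutators. Simultaneously, since $O(x) \cap O(y) = \emptyset$, I would choose these small clopen sets so that each elementary commutator can be assigned either to act along $O(x)$ in an $O^+(x)$-preserving manner (contributing to $\Gamma_x$) or along $O(y)$ in an $O^+(y)$-preserving manner (contributing to $\Gamma_y$). Collecting factors and setting $a := P \cdot ($x$\text{-pieces of }R)$ and $b := ($y$\text{-pieces of }R)$ would then yield $g = ab \in \Gamma_x \cdot \Gamma_y$.

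The main obstacle is this simultaneous pairing-and-routing step: verifying that the balance $\varphi(R) = 0$ is strong enough both to express $R$ as a product of commutators via Remark \ref{RemarkInfiniteAbelianGroup}(2), \emph{and} to distribute these commutators between $\Gamma_x$ and $\Gamma_y$ consistent with the stabilizer constraints. The orbit-disjointness $O(x) \cap O(y) = \emptyset$ is the crucial freedom here: after sufficient refinement of $\Xi_n$ via Remark \ref{RemarkKakutaniRokhlin}(3), the atoms ``critical'' for $O(x)$ and those for $O(y)$ become spatially separated, so each elementary commutator can be routed into the correct locally-finite subgroup.
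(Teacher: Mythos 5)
There is a genuine gap, and it sits exactly at the ``pairing-and-routing'' step you yourself flag as the main obstacle; as stated, that step cannot work. The rotation $R$ in the factorization $Q=PR$ of Theorem \ref{TheoremRepresentation} is, by construction, precisely the part of $Q$ that moves points of the orbit of $x$ across the cut at $x$: each factor $T_{U(i)}$ sends the backward-orbit point $T^{-(i+1)}(x)$ into $O^+(x)$, and each $T_{D(j)}^{-1}$ sends $T^{j}(x)$ into $O^-(x)$. Hence no product of such factors that actually performs a crossing can lie in $[[T]]_x$, so there are no ``$x$-pieces of $R$'' to absorb into $\Gamma_x$; they would all have to go to $\Gamma_y$. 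But they are not in $[[T]]_y$ either: the support of $T_{U(i)}$ (resp.\ $T_{D(j)}$) is an \emph{entire level} $U(i)$ (resp.\ $D(j)$) of $\Xi_n$, which meets the $T$-orbit of $y$ at points lying within one tower-height of $y$, and the induced map shifts such points by a full tower height $h_w^{(n)}$, so it can carry points of $O^-(y)$ into $O^+(y)$. Refining the partition via Remark \ref{RemarkKakutaniRokhlin}(3) does not help, because it does not shrink the supports $U(i)$, $D(j)$ of these induced transformations, and it also does not make $U(i)$ a $T$-translate of $D(j)$ --- so the appeal to Remark \ref{RemarkInfiniteAbelianGroup}(2) to get commutator membership of the paired factors is likewise unjustified (that remark needs the two bases to be translates of one another, which unions of levels over towers of different heights are not).

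The paper's proof sidesteps the rotation entirely. From the lemma $\varphi(Q)=a(Q)-b(Q)$ and $\varphi(Q)=0$ it only extracts that the crossing index sets $I^+=\{n\geq 0: Q(T^nx)\in O^-(x)\}$ and $I^-=\{n<0: Q(T^nx)\in O^+(x)\}$ have equal (finite) cardinality; note $I^\pm\subset[-q,q]$ with $q=\max|f_Q|$. It then builds a \emph{new} involution $P_2$ supported on small clopen sets $T^n(Y)$, $n\in I^+\cup I^-$, where $Y\ni x$ is chosen of the form $Y=C\sqcup T^p(C)$ with $T^i(Y)$, $-q\le i\le q$, pairwise disjoint and avoiding $y$; $P_2$ swaps these neighborhoods according to a bijection $I^+\leftrightarrow I^-$. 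Smallness of $Y$ (not of partition atoms) is what puts $P_2$ in $[[T]]_y$, the splitting $Y=C\sqcup T^p(C)$ makes $P_2$ a product of two conjugate involutions and hence an element of $[[T]]'$, so $P_2\in\Gamma_y$; and by construction $QP_2^{-1}$ preserves $O^+(x)$, giving $QP_2^{-1}\in\Gamma_x$. If you want to salvage your plan, you would have to replace the level-supported induced maps $T_{U(i)},T_{D(j)}$ by such small-support corrections tied to the finitely many crossing points of the $x$-orbit --- which is exactly the paper's argument.
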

\begin{proof}

Let $Q\in \Gamma$. Let $\{\Xi_n\}_{n\geq 1}$ be a Kakutani-Rokhlin partition built over the point $x$.  Applying Theorem \ref{TheoremRepresentation} to the element $Q$ and partitions $\{\Xi_n\}$, we can represent $Q$  as $Q = P_nR_n$, where $P_n$ is an $n$-permutation and $R_n$ is an $n$-rotation (with respect to these partitions).  Clearly, $P_n\in [[T]]_x$.

Set $$I^+ = \{n\geq 0: Q(T^n(x))\in O^-(x)\}$$ and
 $$I^- = \{n < 0: Q(T^{n}(x))\in O^+(x)\}.$$

It follows from the lemma above that $card(I^+) = card(I^-) = m$ for some $m$.  Using the minimality of $T$, we can find a clopen set $Y$ of the form $Y= C\sqcup T^p(C)$ (for some $p\in \mathbb Z$) such that $x\in Y$ and $T^i(Y)\cap T^j(Y) = \emptyset$ for any $-q\leq i\neq j\leq q$, where $q = \max_{x\in X}|f_Q(x)|$, and such that none of the sets $\{T^j(Y) : -q\leq j \leq q\}$ contains the point $y$.

Fix any bijection $\pi$  that maps $I^-$ onto $I^+$ and $I^+$ onto $I^-$. Set $I = I^-\cup I^+$.  Define a homeomorphism $P_2$
as follows: $P_2| T^n(Y) = T^{\pi(n)-n} |T^n(Y)$ for any $n\in I$ and $P_2 = id$ elsewhere. Observe that $P_2\in [[T]]$ and $P_2$ is an involution.
 We notice that $P_2\in [[T]]_y$ as the set $Y$ was chosen so that $\{T^j(Y) : -q\leq j \leq q\}$ did not contain the point $\{y\}$.
  Since $Y = C\sqcup T^p(C)$, the element $P_2$ is a product of two isomorphic involutions (built over $C$ and $T^p(C)$, respectively). This implies that $P_2\in [[T]]'$. Hence $P_2\in [[T]]_y$.

  Set $P_1 = QP_2^{-1}\in [[T]]'$. Let  $x_n = T^n(x)$.  If $n\in I^+$, then $P_2^{-1} x_n$ is moved by $Q$ to the forward orbit of $x$. Thus, $P_1x_n = Q P_2^{-1} x_n\in O^+(x)$. Similarly, if $n\notin I^+$, then $P_2(x_n)\in O^+(x)$. This implies that  $P_1\in [[T]]_x$.
\end{proof}

%%%%%%%%%%
%
%
% The bibliography

\end{document}